\documentclass[12pt]{article}
\usepackage{amsmath,amsthm,amsfonts,amssymb,times,hyperref,graphicx,caption}
\usepackage{xcolor}

\evensidemargin=0 true in
\oddsidemargin=0 true in
\topmargin=0 true in
\headheight=0 true in
\headsep=0 true in
\footskip=0.5 true in
\textwidth=6.5 true in
\textheight=9 true in

\newtheorem{theorem}{Theorem}
\newtheorem{lemma}{Lemma}

\theoremstyle{definition}

\newtheorem{conjecture}{Conjecture}

\DeclareMathOperator{\mmod}{\:mod}

\def\a{n}
\def\r{k}
\def\c{w}

\allowdisplaybreaks

\begin{document}

\centerline{\textit{\Large On the irreducibility of the non-cyclotomic part}}
\smallskip
\centerline{\textit{\Large of most $0,1$-polynomials with few terms}}

\medskip
\bigskip
\centerline{by M.~Filaseta and A.~Kalogirou}

\bigskip
\bigskip
\centerline{\textit{Dedicated to Mel Nathanson and Carl Pomerance in celebration of their $80^{\rm th}$ birthdays}}
\bigskip

\section{Introduction}

In \cite{sch7}, A.~Schinzel established the following interesting result.

\begin{theorem}[Schinzel, 1986]\label{schinzelthm}
Let $k$ be an integer $\ge 2$.
Let $a_{0}, a_{1}, \ldots, a_{k}$ be fixed non-zero integers.  Then for almost all choices of positive integers $n_{1} < \cdots < n_{k}$, the polynomial $F(x) = a_{0} + a_{1} x^{n_{1}} + \cdots + a_{k} x^{n_{k}}$, removed of its cyclotomic factors, is irreducible over $\mathbb Q$.
\end{theorem}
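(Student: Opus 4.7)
The plan is to fix a large parameter $N$, consider the $\binom{N}{k} \sim N^{k}/k!$ tuples $1 \le n_{1} < \cdots < n_{k} \le N$, and bound by $o(N^{k})$ the number of exceptional tuples for which the non-cyclotomic part $F^{*}$ of $F$ is reducible over $\mathbb{Q}$. Showing that the exceptional set has density zero is precisely what ``almost all'' requires here.

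The central ingredient would be a finite-type theorem for nontrivial factorizations of lacunary polynomials. Concretely, if $F^{*}(x) = G(x) H(x)$ with $G, H$ non-cyclotomic and non-constant, I would aim to show that the pair $(G, H)$ belongs to one of only finitely many \emph{types}, each of which is a pair of parametric polynomials $(G_{0}(x; t_{1}, \ldots, t_{r}), H_{0}(x; t_{1}, \ldots, t_{r}))$ whose exponents are integer-linear functions of auxiliary parameters $t_{j}$, with $r < k$, and whose coefficients lie in a finite set determined by $a_{0}, \ldots, a_{k}$. To extract such a theorem, I would combine the bound $M(F) \le \sqrt{k+1}\,\max_{i} |a_{i}|$ on the Mahler measure with Smyth's lower bound $M(P) \ge 1.3247\ldots$ for non-reciprocal non-cyclotomic $P$, which bounds both the number and the shape of non-reciprocal factors. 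The reciprocal part I would treat via the classical Schinzel substitution sending a reciprocal polynomial in $x$ to a non-reciprocal polynomial in $y = x + x^{-1}$, reducing to the non-reciprocal analysis.

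Granted the finite-type theorem, the counting step should be routine. For each type, the identity $F = G_{0}(x;t) H_{0}(x;t) \cdot (\text{cyclotomic})$ matches the support $\{0, n_{1}, \ldots, n_{k}\}$ of $F$ to a Minkowski sum involving the supports of $G_{0}, H_{0}$, and cyclotomic factors, whose elements are integer-linear in $t_{1}, \ldots, t_{r}$. Since $r < k$ in any nontrivial type, this matching pins $(n_{1}, \ldots, n_{k})$ to a subvariety of codimension $\ge 1$ in $\mathbb{Z}^{k}$, contributing at most $O(N^{k-1})$ tuples per type. Summing over the finite list of types yields the target $O(N^{k-1}) = o(N^{k})$.

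The main obstacle, I expect, is the finite-type theorem itself, and within it the treatment of the reciprocal part, where Mahler-measure methods are weak in view of Lehmer's unresolved conjecture. A secondary but nontrivial technicality is that the cyclotomic content of $F$ depends arithmetically on the tuple (through gcds and residues of the $n_{i}$ modulo small integers), so the passage from $F$ to $F^{*}$ must be carried out uniformly across all tuples, and the parametric types must be recorded together with their cyclotomic partners rather than in isolation.
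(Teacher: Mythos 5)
Your outer framework (fix $N$, bound the exceptional tuples by $o(N^{k})$) matches the paper's, and your counting step---that a factorization type whose exponents are integer-linear in fewer than $k$ parameters forces a linear relation among the $n_{i}$ and hence contributes $O(N^{k-1})$ tuples---is indeed how the paper closes each case. But everything hinges on the ``finite-type theorem,'' which you assert rather than prove, and the tools you propose for it do not suffice. For the non-reciprocal part, the bound $M(F)\le\sqrt{k+1}\max_i|a_i|$ together with Smyth's theorem only shows that the number of non-reciprocal irreducible factors is at most a constant $C(k)$ \emph{for every} tuple; it gives no mechanism for showing that this number is exactly one for \emph{almost all} tuples, and it certainly does not constrain the ``shape'' of the factors. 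The paper's actual engine here is entirely different: if $F$ has two non-reciprocal irreducible factors, then (Lemma~\ref{fillemma}) there is a $0,1$-polynomial $G\notin\{F,\tilde F\}$ with $F\tilde F=G\tilde G$, and a term-by-term comparison of the exponents on both sides of this identity is what produces the nontrivial linear relations among $n_{1},\dots,n_{k}$ that your counting step needs. Without that identity (or Schinzel's general theory of $K$-standard factorizations, which is a substantial theorem in its own right), the non-reciprocal case is not established.

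The reciprocal non-cyclotomic case is worse, and you have essentially conceded it. The substitution $y=x+x^{-1}$ does not ``reduce to the non-reciprocal analysis'': it destroys lacunarity (e.g.\ $x^{n}+x^{-n}$ becomes a dense degree-$n$ Chebyshev-type polynomial in $y$), so none of the sparse-polynomial machinery survives the substitution, and Lehmer's problem blocks any Mahler-measure lower bound for the reciprocal factors themselves. The paper handles this case by a completely different route: a Dirichlet-box-principle lemma (Lemma~\ref{ffklemma}) writes $n_{j}=m_{j}d+t_{j}$ with the $t_{j}$ small relative to $d$, lifts $F$ and $\tilde F$ to two-variable polynomials $G(y,z)$, $H(y,z)$ with $G(x,x^{d})=x^{t''}F(x)$, and then a resultant/Newton-polytope dichotomy shows that an irreducible reciprocal factor either has degree $O(N^{2k/(2k+1)})$---handled via Dimitrov's resolution of Schinzel--Zassenhaus (or Dobrowolski), which forces $n_{k}-n_{k-1}$ to be small and hence rare---or forces a vanishing $2\times2$ determinant in the exponent data, again a rare event. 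None of this is recoverable from your sketch, so the proposal has a genuine gap precisely at the two steps you flag as obstacles. (Note also that the paper itself only proves the $0,1$-coefficient case, Theorem~\ref{schinzelweakthm}, citing Schinzel for the general statement you are attempting.)
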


In other words, for a fixed integer $k \ge 2$, the number of choices for $n_{1}, \ldots, n_{k}$ in the interval $[1,N]$ for which $F(x)$, removed of its cyclotomic factors, is irreducible divided by $\binom{N}{k}$ tends to $1$ as the positive integer $N \ge k$ tends to infinity.
Notably, Schinzel's result is much stronger, allowing the coefficients to come from an arbitrary algebraic number field and giving an explicit upper bound on the number of exceptional choices of $n_{1}, \ldots, n_{k}$ in the interval $[1,N]$ for which $F(x)$, removed of its cyclotomic factors, is reducible.  This upper bound was improved upon in Schinzel's work \cite{sch12} to $\ll N^{\lfloor (k+1)/2 \rfloor}$, where the implied constant depends only on the coefficients $a_{0}, a_{1}, \ldots, a_{k}$.

Recently, the second author \cite{kal} made use of Theorem~\ref{schinzelthm} to establish, in an analogous lacunary setting, a conjecture of A.~Odlyzko and B.~Poonen \cite{OP}.   Odlyzko and Poonen conjectured that almost all polynomials with coefficients coming from $\{ 0, 1 \}$ and constant term $1$ are irreducible.  To help explain the second author's result, we note that one can show that a positive proportion, depending on $k$, of the polynomials of the form
\begin{equation}\label{F(x)eq}
F(x) = 1 + x^{n_{1}} + \cdots + x^{n_{k}},
\qquad
\text{where $n_{j} \in \mathbb Z^{+}$ and $n_{1} < \cdots < n_{k}$},
\end{equation}
are reducible; for example, a positive proportion will be divisible by $(x^{k+1}-1)/(x-1)$.  In \cite{kal}, the second author was able to show that this proportion must go to $0$ as $k$ goes to infinity.  More precisely, he establishes
\[
\lim_{k \rightarrow \infty} \limsup_{N \rightarrow \infty}
\dfrac{\big| \big\{ (n_{1}, \ldots, n_{k}) \in [1,N]^{k} : n_{1} < \cdots < n_{k},  F(x) \text{ in \eqref{F(x)eq} is reducible} \big\} \big|}{\binom{N}{k}} = 0.
\]

The Odlyzko-Poonen conjecture was resolved under the extended Riemann hypothesis by E.~F.~Breuillard and P.~P.~Varj\'u~\cite{BV}.  
The conjecture has, since it was proposed, sparked an interest in the study of polynomials with random coefficients and high degree, with the first breakthrough coming from S.~V.~Konyagin~\cite{Sergei}, whose work was highly influential both in the work of Breuillard and Varj\'u~\cite{BV}, as well as in the broader work of L.~Bary-Soroker and G.~Kozma~\cite{Bary} and L.~Bary-Soroker, D.~Koukoulopoulos and G.~Kozma~\cite{BKK}.
The conjecture remains open unconditionally, and while the question addressed in \cite{kal} has some external similarity with the Odlyzko-Poonen conjecture, the two are associated with very different random polynomial models and the two are independent.

In the next three sections, we provide a more elementary, largely self-contained, account of what is needed from Theorem~\ref{schinzelthm} for the second author's work in \cite{kal}, with the hope of making Schinzel's important work in \cite{sch7,sch12}, at least in the case of $0,1$-polynomials (polynomials whose coefficients are each $0$ or $1$), more easily accessible to a wider audience.  Specifically, we prove the following.

\begin{theorem}\label{schinzelweakthm}
Fix an integer $k \ge 2$.  
For almost all choices of positive integers $n_{1} < \cdots < n_{k}$, the polynomial $F(x) = 1 + x^{n_{1}} + \cdots + x^{n_{k}}$, removed of its cyclotomic factors, is irreducible.  More precisely, for $N$ a positive integer, the number of choices for $(n_{1}, \ldots, n_{k}) \in [1,N]^{k}$ with $n_{1} < \cdots < n_{k}$ and with $F(x)$, removed of its cyclotomic factors, reducible is $o(N^{k})$ as $N \rightarrow \infty$.
\end{theorem}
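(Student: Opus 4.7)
The plan is to follow Schinzel's strategy, specialized to $0,1$-polynomials. I would classify factorizations $F = GHC$, with $G, H \in \mathbb{Z}[x]$ non-cyclotomic of positive degree and $C$ a product of cyclotomic polynomials, into finitely many ``types'' depending only on $k$, and then show that each type admits only $O_{k}(N^{k-1})$ exponent vectors $(n_1, \ldots, n_k) \in [1,N]^{k}$, so that a union bound closes the gap.

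The first step is a Mahler-measure bound. Since $F(x) = 1 + x^{n_1} + \cdots + x^{n_k}$ has $\|F\|_{2} = \sqrt{k+1}$, one has $M(F) \le \sqrt{k+1}$. Writing $F = GHC$ and using multiplicativity together with $M(C) = 1$ yields $1 < M(G), M(H) \le \sqrt{k+1}$, so the non-cyclotomic factors of $F$ have Mahler measure bounded solely in terms of $k$.

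The structural core is a Gourin-type finiteness lemma: any non-cyclotomic factor $G \in \mathbb{Z}[x]$ of some $F$ as above, with $M(G) \le \sqrt{k+1}$ and $G(0) = \pm 1$, should admit a representation $G(x) = G^{*}(x^{d})$ for some primitive $G^{*}$ (not itself a polynomial in $x^{e}$ for $e > 1$) drawn from a finite set $\mathcal{G}_{k}$ depending only on $k$, and some $d \in \mathbb{Z}^{+}$; analogously for $H$. Here the $0,1$-structure is essential: because every exponent in $F$ appears with coefficient at most $1$, the cancellation pattern in the product $GH$ is rigid, limiting the number of reduced shapes $(G^{*}, H^{*})$ to a finite list. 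With this classification in hand, the identity $F = G^{*}(x^{d_{G}}) H^{*}(x^{d_{H}}) C(x)$ forces the support set $\{0, n_1, \ldots, n_k\}$ to match the Minkowski sum $d_{G} \cdot \mathrm{supp}(G^{*}) + d_{H} \cdot \mathrm{supp}(H^{*})$ modulo contributions from $C$. Since both factors have positive degree and $\mathrm{supp}(G^{*}), \mathrm{supp}(H^{*})$ are fixed, this yields at least one nontrivial linear relation among $n_1, \ldots, n_k$, cutting them out on a sublattice of codimension at least one and contributing $O_{k}(N^{k-1})$ tuples. Summing over the $O_{k}(1)$ types gives the desired $o(N^{k})$ bound.

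The main obstacle I anticipate is the Gourin-type finiteness result. Polynomials of bounded Mahler measure are not finite in number in general, so the argument must really exploit the $0,1$-structure, the constant-term condition $G(0) = \pm 1$, and the restricted support of $F$ to force $G$ into the shape $G^{*}(x^{d})$ with $G^{*}$ drawn from a finite list. Making this finiteness quantitative and self-contained, rather than appealing to the general algebraic-number-field machinery of \cite{sch7, sch12}, is presumably the main technical point the paper needs to address.
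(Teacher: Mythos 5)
Your outline stalls exactly at the point you flag as the ``main obstacle,'' and that obstacle is not a technical detail to be filled in later --- it is the entire content of the theorem. The proposed Gourin-type finiteness lemma (every non-cyclotomic factor $G$ of $F$ has the form $G^{*}(x^{d})$ with $G^{*}$ from a finite list depending only on $k$) is not provable by exploiting ``rigid cancellation patterns'' in a $0,1$-product; no such rigidity argument exists, and for reciprocal non-cyclotomic factors the statement is essentially a strong form of the Bombieri--Zannier theorem (Theorem~\ref{bzthm} in the paper), which the authors explicitly describe as outside the scope of elementary methods. The phrase ``modulo contributions from $C$'' also hides a fatal issue: the cyclotomic part $C$ can have enormous degree and completely destroys any correspondence between the support of $F$ and the Minkowski sum of the supports of $G^{*}(x^{d_G})$ and $H^{*}(x^{d_H})$, so even granting the finiteness lemma, the claimed linear relation among $n_1,\dots,n_k$ does not follow. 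The Mahler-measure bound $M(G)\le\sqrt{k+1}$ is correct but, as you note yourself, bounded Mahler measure gives no finiteness, so it carries no weight in the argument.

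The paper's proof is organized around a dichotomy your proposal never draws: non-reciprocal versus reciprocal factors. For the non-reciprocal part, the elementary identity $F\tilde F=G\tilde G$ (Lemma~\ref{fillemma}, proved by comparing coefficients and Cauchy--Schwarz) shows that two non-reciprocal irreducible factors force a second $0,1$-solution $G\notin\{F,\tilde F\}$, and a purely combinatorial comparison of exponents then forces a nontrivial linear relation among the $n_i$, giving $O_k(N^{k-1})$ exceptions. For irreducible non-cyclotomic \emph{reciprocal} factors $g(x)$, the paper uses a Dirichlet-box-principle lemma to write $n_j=m_jd+t_j$ with small $|t_j|$, lifts $F$ and $\tilde F$ to bivariate polynomials $G(y,z),H(y,z)$, and then either a resultant bound shows $\deg g\ll N^{2k/(2k+1)}$ (which, via Dimitrov's resolution of Schinzel--Zassenhaus, forces $n_k-n_{k-1}$ to be small, a rare event), or a Newton-polytope argument yields the determinant condition \eqref{gencommonfac2}, which is converted into gcd/divisibility constraints on the $n_i$ that again hold for only $O_k(N^{k-1/(2k+1)})$ vectors. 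To make your approach work you would need to either prove the finiteness lemma (which amounts to reproving Bombieri--Zannier) or replace it with machinery of this kind.
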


\noindent
In the last section of the paper, we also connect some of the ideas that permeate Schinzel's work and their presentation here with the conjecture posed by Odlyzko and Poonen.  We provide there an additive number theoretic  conjecture whose resolution would mark significant progress towards the Odlyzko-Poonen conjecture and is at the same time interesting in its own right.

To prove Theorem~\ref{schinzelweakthm}, we will take advantage of the fact that the coefficients of $F(x)$ are $0$ and $1$ in a way that will simplify what was done in \cite{sch7, sch12}.  
In particular, restricting the coefficients to $0$ and $1$ allows us to showcase more easily the themes around non-reciprocal factors, that is, factors $w(x)$ where $w(x) \ne \pm \,x^{\deg w} w(1/x)$.  Generally speaking, a lacunary polynomial will have more than one non-reciprocal irreducible factor if and only if its exponents have extra additive structure that one can exploit to reduce their count.  

To address possible irreducible reciprocal factors of $F(x)$ that are not cyclotomic, that is non-cyclotomic irreducible polynomials $w(x) \in \mathbb Z[x]$ of degree at least $1$ for which $w(x) = \pm \,x^{\deg w} w(1/x)$, our argument will borrow ideas from \cite{sch7} while providing some simplifications and different approaches to aspects of Schinzel's arguments there.

Before leaving this introduction, we address the important achievement of E.~Bombieri and U.~Zannier as given in the appendix by Zannier in \cite{schbook} (see also \cite[Theorem~1.6]{bmz} and \cite[Theorem~BZ]{sch12}).  We restrict the result to the context of Theorem~\ref{schinzelthm} and Theorem~\ref{schinzelweakthm}.

\begin{theorem}[Bombieri and Zannier, 1998]\label{bzthm}
Fix an integer $k \ge 2$.  
Let $n_{1} < \cdots < n_{k}$ be positive integers, and let 
$G(x_{1},\ldots,x_{k})$ and $H(x_{1},\ldots,x_{k})$ in $\mathbb Z[x_{1},\ldots,x_{k}]$ be relatively prime polynomials in $\mathbb Q[x_{1},\ldots,x_{k}]$.
Suppose $G(x^{n_{1}},\ldots,x^{n_{k}})$ and $H(x^{n_{1}},\ldots,x^{n_{k}})$ have a non-cyclotomic irreducible factor in common.  Then there are $\gamma_{j} \in \mathbb Z$, not all $0$, and a computable constant $C(G,H)$ only depending on $G$ and $H$ such that
\[
\gamma_{1} n_{1} + \cdots + \gamma_{k} n_{k} = 0 
\qquad \text{and} \qquad
\sum_{j=1}^{k}|\gamma_{j}| \le C(G,H).
\]
\end{theorem}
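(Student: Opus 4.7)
The plan is to recast the hypothesis geometrically in the algebraic torus $\mathbb G_m^k$ and appeal to a quantitative intersection statement between a subvariety and a one-parameter subgroup. Let $w(x)$ be a non-cyclotomic irreducible common factor of $G(x^{n_1},\ldots,x^{n_k})$ and $H(x^{n_1},\ldots,x^{n_k})$, and let $\alpha \in \overline{\mathbb Q}^{*}$ be a root of $w(x)$; non-cyclotomicity makes $\alpha$ of infinite multiplicative order. The point $P := (\alpha^{n_1},\ldots,\alpha^{n_k})$ then lies simultaneously in the common zero locus $V := V(G,H) \subset \mathbb G_m^k$ and on the monomial curve $X_{\mathbf n} := \{(t^{n_1},\ldots,t^{n_k}) : t \in \mathbb G_m\}$. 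Since $G, H$ are coprime in $\mathbb Q[x_1,\ldots,x_k]$, they form a regular sequence in this Cohen--Macaulay ring, so $V$ has pure codimension two, while $\dim X_{\mathbf n} = 1$. The task becomes: a non-torsion intersection point of a codimension-two subvariety with a one-parameter subgroup must force a short integer relation on the subgroup's exponents.

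The case $k=2$ is elementary. Here $V$ consists of at most $(\deg G)(\deg H)$ common zeros $(\beta_i,\gamma_i)$ by B\'ezout, and $P = (\beta_i,\gamma_i)$ for some index $i$. Since $\alpha$ has infinite order, neither $\beta_i$ nor $\gamma_i$ is a root of unity, so the lattice of multiplicative relations between them has rank at most one; on the other hand, $\beta_i^{n_2}\gamma_i^{-n_1} = 1$ gives a non-trivial such relation. Hence the lattice has rank exactly one, generated by some primitive $(a_i,b_i)$; the vector $(n_2,-n_1)$ is then an integer multiple of $(a_i,b_i)$, yielding $a_i n_1 + b_i n_2 = 0$. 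The size $|a_i|+|b_i|$ is controlled by the heights of $\beta_i, \gamma_i$, and these are in turn effectively bounded from the coefficients of $G$ and $H$ by arithmetic B\'ezout inequalities, giving the explicit $C(G,H)$.

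For general $k \ge 3$, one decomposes $V$ into irreducible components $V_j$ and analyzes each. If some $V_j$ containing $P$ is a torsion coset of a proper subtorus of $\mathbb G_m^k$, the character lattice of that subtorus furnishes a non-zero $\mathbf \gamma \in \mathbb Z^k$ with $P^{\mathbf \gamma}$ a root of unity; combined with $\alpha$ having infinite order, this gives $\mathbf \gamma \cdot \mathbf n = 0$, with $\sum|\gamma_j|$ bounded by the degree of $V_j$. If $V_j$ is not torsion-dense, any non-torsion $P \in V_j \cap X_{\mathbf n}$ falls in the exceptional set of Bombieri--Masser--Zannier, which by \cite[Theorem~1.6]{bmz} lies in a finite union of proper subtori of effectively bounded degree; each such subtorus again supplies a short annihilator of $\mathbf n$. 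The principal obstacle is exactly this quantitative step: while the $k=2$ argument exposes the right mechanism (multiplicative dependence of coordinates at isolated common zeros), extending it uniformly through higher-dimensional components while preserving the effective constant $C(G,H)$ draws on non-trivial Diophantine-geometric input, namely Dobrowolski--Amoroso--David lower bounds for the Mahler measure, Philippon's arithmetic B\'ezout theorem, and quantitative versions of Laurent's torsion-coset theorem.
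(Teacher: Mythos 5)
A point of orientation first: the paper does not prove this theorem. It is quoted as a deep external result, with the proof explicitly declared ``outside the scope of this paper'' and attributed to Zannier's appendix in \cite{schbook} (see also \cite[Theorem~1.6]{bmz}); indeed the paper goes on to say it will not even use the theorem in its arguments. So there is no in-paper proof to compare yours against, and the only question is whether your sketch stands on its own as a proof.

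It does not, although it correctly identifies the geometric setting (the point $P=(\alpha^{n_1},\ldots,\alpha^{n_k})$ on the intersection of $V(G,H)$ with a monomial curve in $\mathbb G_m^{k}$). The decisive gap is in the case $k\ge 3$: there you invoke the Bombieri--Masser--Zannier structure theorem on torsion cosets and the exceptional set of \cite{bmz} to produce the short relation $\gamma_1 n_1+\cdots+\gamma_k n_k=0$. But the statement you are asked to prove is essentially a corollary of exactly that body of work---which is why the paper cites \cite[Theorem~1.6]{bmz} as a source for it---so quoting it is circular rather than a proof. You concede as much when you call the uniform effective bound the ``principal obstacle'' and list the required inputs (Dobrowolski--Amoroso--David, Philippon's arithmetic B\'ezout, quantitative versions of Laurent's theorem) without carrying out any of them. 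Since the entire content of the theorem is the existence of a computable constant $C(G,H)$ depending only on $G$ and $H$ and not on $n_1,\ldots,n_k$, deferring precisely that step leaves the theorem unproved. The $k=2$ paragraph is closer to complete, but even there the bound on $|a_i|+|b_i|$ requires an effective theorem on the smallest multiplicative relation between two multiplicatively dependent non-torsion algebraic numbers of bounded height and degree (Loxton--van der Poorten, Masser); ``controlled by the heights'' is an assertion, not an argument. Two smaller points: the pure codimension-two claim for $V(G,H)$ plays no role, and one must check the coordinates of $P$ are nonzero (e.g.\ rule out the irreducible non-cyclotomic factor $w(x)=x$) before placing $P$ in the torus.
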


A proof of Theorem~\ref{bzthm} is outside the scope of this paper, but it is worth noting that Theorem~\ref{bzthm} allows us to quickly give an appropriate bound on the number of polynomials of the form given in Theorem~\ref{schinzelthm} which have an irreducible reciprocal non-cyclotomic factor.  Here, the terminology \textit{reciprocal}, as indicated above, has the following meaning.  If $f(x) \in \mathbb R[x]$, then the reciprocal of $f(x)$ is $\tilde{f}(x) = x^{\deg f}f(1/x)$.  A polynomial $f(x) \in \mathbb R[x]$ is said to be reciprocal if $f(x) = \pm \tilde{f}(x)$.  This definition is motivated by the fact that if $f(x)$ is reciprocal and $\alpha \in \mathbb C$ is a root of $f(x)$, then $\alpha \ne 0$ and $1/\alpha$ is also a root of $f(x)$.  The roots $\alpha$ and $1/\alpha$ will also occur in the factorization of $f(x)$ over $\mathbb C$ to the same multiplicity.  The cyclotomic polynomials are examples of reciprocal polynomials.

Fix non-zero integers $a_{0}, a_{1}, \ldots, a_{k}$.   
Take
\[
G(x_{1},\ldots,x_{k}) = a_{0} + a_{1} x_{1} + \cdots + a_{k} x_{k}
\quad \text{ and } \quad
H(x_{1},\ldots,x_{k}) = x_{1} \cdots x_{k} G(x_{1}^{-1},...,x_{k}^{-1}).
\]
One can check that these polynomials are relatively prime in $\mathbb Q[x_{1},\ldots,x_{k}]$ since they are distinct linear polynomials in $x_{1}$ (or in any other variable) and neither has a factor in $\mathbb Q[x_{2},\ldots,x_{k}]$.  Now, suppose 
$F(x) = a_{0} + a_{1} x^{n_{1}} + \cdots + a_{k} x^{n_{k}}$, as in the statement of Theorem~\ref{schinzelthm}, has an irreducible non-cyclotomic reciprocal factor $w(x) \in \mathbb Z[x]$, and let $\alpha$ be a root of $w(x)$.  Then $\alpha$ and $1/\alpha$ are roots of $w(x)$ and, hence, of $F(x)$.  We deduce that $\alpha$ is a root of both $G(x^{n_{1}},\ldots,x^{n_{k}}) = F(x)$ and
\[
H(x^{n_{1}},\ldots,x^{n_{k}}) 
= x^{n_{1}+\cdots+n_{k}} G(x^{-n_{1}},...,x^{-n_{k}})
= x^{n_{1}+\cdots+n_{k-1}} F(1/x).
\]
Thus, $G(x^{n_{1}},\ldots,x^{n_{k}})$ and $H(x^{n_{1}},\ldots,x^{n_{k}})$ are both divisible by $w(x)$.  
We obtain from Theorem~\ref{bzthm} that $n_{1}, \ldots, n_{k}$ must satisfy one of finitely many linear equations in $n_{1}, \ldots, n_{k}$ over $\mathbb Q$, where the linear equations depend only on $a_{0}, a_{1}, \ldots, a_{k}$ and in particular not on $n_{1}, \ldots, n_{k}$.  For each such equation, one of the variables $n_{j}$ can be expressed in terms of the others leading to $\ll N^{k-1}$ choices for $n_{1} < \cdots < n_{k} \le N$ satisfying the equation.  
We deduce then that the number of choices of positive integers $n_{1}, \ldots, n_{k}$, with $n_{1} < \cdots < n_{k} \le N$, for which $F(x)$ has an irreducible non-cyclotomic reciprocal factor is $\ll_{k} N^{k-1}$ which is small compared to $\binom{N}{k}$ and thus negligible in comparison to all $F(x)$ considered in Theorem~\ref{schinzelthm}.  
To clarify, as in Schinzel's initial work on this topic in \cite{sch7}, we will not make use of this application of Theorem~\ref{bzthm} as we proceed.


\section{Non-reciprocal factors}\label{secttwononrecippart}

The \textit{non-reciprocal part} of a polynomial $f(x) \in \mathbb Z[x]$ is $f(x)$ removed of every irreducible reciprocal factor in $\mathbb Z[x]$ having positive leading coefficient.  Thus, for example, the non-reciprocal part of $f(x) = 2 x (x+1)^{2} (3x^2 - 3x - 1) (-2x^2+3x-2)$ is $- x (3x^2 - 3x - 1)$, which corresponds to the irreducible factors $2$, $x+1$ twice, and $2x^{2}-3x+2$ in $\mathbb Z[x]$ being removed from $f(x)$.  
In this section, we show that for almost all polynomials $F(x)$ as in Theorem~\ref{schinzelweakthm}, the non-reciprocal part of $F(x)$ is irreducible.  

\begin{lemma}\label{NRlemma}
Fix an integer $k \ge 2$. For all but $O_k(N^{k-1})$ choices of positive integers $n_1 < n_2 < \cdots < n_k \le N$, the $0,1$-polynomial $F(x)$ in \eqref{F(x)eq} has exactly one irreducible non-reciprocal factor. 
\end{lemma}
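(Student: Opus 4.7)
Plan: My goal is to bound by $O_k(N^{k-1})$ the number of tuples $(n_1,\ldots,n_k)$ with $1 \le n_1 < \cdots < n_k \le N$ for which the non-reciprocal part $W(x)$ of $F(x)$ is reducible (either two distinct irreducible non-reciprocal factors, or a single non-reciprocal irreducible factor of multiplicity $\ge 2$). The strategy is to exhibit the exceptional set as a union of $O_k(1)$ affine subvarieties of the form $c_0 + c_1 n_1 + \cdots + c_k n_k = 0$ with bounded integer coefficients $|c_j| \le C(k)$, and to note that any such bounded linear relation forces one of the $n_j$ to be determined by the others, leaving $O_k(N^{k-1})$ lattice points in $[1,N]^k$.

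First, suppose the non-reciprocal part of $F$ is reducible, so that we may write $W = U \cdot V$ in $\mathbb Z[x]$ with $U,V$ non-constant (in the repeated-factor subcase, take $U = V$). Then $F(x) = U(x)\,V(x)\,R(x)$ for the reciprocal part $R(x)$, and correspondingly $\tilde F(x) = \tilde U(x)\,\tilde V(x)\,R(x)$ up to sign. Since $U$ is non-reciprocal, $U$ and $\tilde U$ share no irreducible factor, and likewise for $V$ and $\tilde V$. Pick a root $\alpha$ of $U$ and a root $\beta$ of $V$; both satisfy $1 + y^{n_1} + \cdots + y^{n_k} = 0$, yielding two distinct algebraic solutions whose Galois orbits are disjoint and neither of which is ``reciprocal-related'' to the other.

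The central step is to convert this algebraic separation into a non-trivial integer linear relation
\[
\gamma_0 + \gamma_1 n_1 + \cdots + \gamma_k n_k = 0
\]
with $\max_j |\gamma_j| \le C(k)$ and the $\gamma_j$ not all zero. Following the ideas in \cite{sch7}, the existence of two independent algebraic roots of $F$ which are not linked by reciprocation or Galois action is highly restrictive. Schinzel's approach combines substitutions $x \mapsto x^d$ for small $d$ (used to separate spurious components arising from common divisors of the exponents) with resultant-style identities comparing $F(x)$ against polynomials built from $\tilde F(x)$ and reductions of $F$ obtained by dropping monomials. The $\{0,1\}$-structure of the coefficients of $F$ gives uniform bounds, via Mahler measure, on the heights of $U, V, R$ and their reciprocals, replacing Schinzel's more general algebraic number field manipulations with concrete integer estimates.

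Finally, one tallies: each bounded linear relation contributes $O_k(N^{k-1})$ integer solutions in $[1,N]^k$, and the set of possible relations that can arise from admissible factorizations $(U,V,R)$ is itself of size $O_k(1)$, yielding the stated bound. The main obstacle is the relation-extraction step: the direct tool (Theorem~\ref{bzthm}) is deliberately avoided in the exposition, so Schinzel's more elementary but intricate manipulations must be carried out. The simplification exploited here is the explicit form $G(y_1,\ldots,y_k) = 1 + y_1 + \cdots + y_k$ together with bounded $\{0,1\}$-coefficients, which is what makes the linear-relation extraction tractable without invoking Bombieri--Zannier.
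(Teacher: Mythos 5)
Your high-level counting framework is correct and matches the paper's: the exceptional tuples are shown to satisfy one of boundedly many non-trivial integer linear relations $\gamma_1 n_1 + \cdots + \gamma_k n_k = 0$ with $|\gamma_j| \le C(k)$, and each such relation contributes $O_k(N^{k-1})$ lattice points. However, the heart of the proof --- actually extracting those linear relations from the hypothesis that the non-reciprocal part of $F$ is reducible --- is missing. You name it yourself as ``the main obstacle'' and then gesture at Schinzel's resultant identities, substitutions $x \mapsto x^d$, Mahler measure bounds, and Galois-orbit separation, none of which is developed into a checkable argument. As written, the proposal assumes the conclusion of the relation-extraction step rather than proving it.

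The paper's route is quite different and entirely combinatorial. By Lemma~\ref{fillemma} (proved via a Cauchy--Schwarz argument comparing coefficients of $f\tilde f = g\tilde g$), if $F$ is divisible by a product of two irreducible non-reciprocal polynomials, there is a $0,1$-polynomial $G \notin \{F, \tilde F\}$ with the same number of terms satisfying $F(x)\tilde F(x) = G(x)\tilde G(x)$. Writing $F = \sum_{i=0}^k x^{n_i}$ and $G = \sum_{i=0}^k x^{m_i}$, one matches the top-degree exponents of the two products to get $m_k = n_k$ and (after normalizing by swapping $F \leftrightarrow \tilde F$ and $G \leftrightarrow \tilde G$) $m_k - m_1 = n_k - n_1$; then for each $i$ the exponent $m_k - m_i$ must equal some $n_g - n_h$, and either $g = k$ for all $i$ --- forcing $G = F$, a contradiction --- or one obtains a relation such as $n_u - n_v = n_k + n_h - n_g - n_1$ involving at least three distinct $n$'s, which confines $\vec n$ to $O_k(1)$ hyperplanes. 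No resultants, roots, or height bounds enter. Separately, you also omit the degenerate case excluded by ``exactly one'': when $F$ has \emph{no} irreducible non-reciprocal factor, i.e.\ $F$ is reciprocal; the paper disposes of this via the forced relation $n_k - n_{k-1} = n_1$. To repair your proposal you would need to either prove the relation-extraction step along these lines or supply a genuinely different complete argument for it; the current sketch does not suffice.
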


Faithful to the themes throughout our treatment on $0,1$-polynomials, we will arrive at the above conclusion by finding linear equations satisfied by the components of the vector $\vec{n} = \langle n_1, \ldots, n_k \rangle$, consisting of the non-zero exponents of $F(x)$, in the case where $F(x)$ does not have exactly one irreducible non-reciprocal factor.  The idea that will produce these linear constraints is contained in the following lemma from \cite{fil1999}.  We give a proof to emphasize the simplicity of this result.

\begin{lemma}\label{fillemma}
 Let $f(x) \in \mathbb Z[x]$ be a $0,1$-polynomial, with $f(0) = 1$ and such that $f(x)$ is divisible by the product of two non-reciprocal irreducible polynomials in $\mathbb Z[x]$, not necessarily distinct. 
 Then there is a $0,1$-polynomial $g(x)$ with the same number of terms as $f(x)$ and with $g(x)\not\in \{f(x),\tilde{f}(x)\}$ satisfying
    \begin{equation}\label{non-recip}
        f(x)\tilde{f}(x)=g(x)\tilde{g}(x).
    \end{equation}  
\end{lemma}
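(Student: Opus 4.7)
The plan is to modify $f$ by swapping a non-reciprocal irreducible factor for its reciprocal. Write $f = u \, v \, h$ in $\mathbb Z[x]$, where $u, v$ are the two non-reciprocal irreducible factors supplied by the hypothesis (possibly associates, in which case $u^{2} \mid f$) and $h \in \mathbb Z[x]$. I would then consider the three candidates
\[
g_{1} = \tilde u \, v \, h,
\qquad
g_{2} = u \, \tilde v \, h,
\qquad
g_{3} = \tilde u \, \tilde v \, h.
\]
Using the identity $\widetilde{pq} = \tilde p \, \tilde q$, each product $g_{i} \tilde{g_{i}}$ collapses to $u \tilde u \cdot v \tilde v \cdot h \tilde h = f \tilde f$, so every candidate satisfies \eqref{non-recip} automatically.

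Next I would show that each $g_{i}$ is forced to be a $0,1$-polynomial with the same number of terms $n$ as $f$. Two coefficient relations drive this. First, $\tilde p(1) = p(1)$ for any polynomial $p$, so $g_{i}(1) = u(1) v(1) h(1) = f(1) = n$. Second, if $p(x) = \sum_{j} c_{j} x^{j}$ has degree $d$, then the coefficient of $x^{d}$ in $p(x) \tilde p(x)$ is $\sum_{j} c_{j}^{2}$; comparing $g_{i} \tilde g_{i}$ with $f \tilde f$ forces $\sum_{j} c_{j}^{2} = n$ for each $g_{i}$. For integer coefficients one has $c_{j}^{2} \ge c_{j}$, with equality exactly when $c_{j} \in \{0,1\}$, and combining $\sum c_{j} = \sum c_{j}^{2} = n$ then forces every coefficient of $g_{i}$ into $\{0, 1\}$ with exactly $n$ of them equal to $1$.

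The main obstacle is verifying that at least one $g_{i}$ lies outside $\{f, \tilde f\}$. Direct inspection shows $g_{1} = f$ would require $\tilde u = u$, excluded by non-reciprocity, while $g_{1} = \tilde f$ holds precisely when $vh$ is reciprocal; the analogous statement for $g_{2}$ uses $uh$. If both $vh$ and $uh$ are reciprocal, dividing these relations gives $u \tilde v = \pm v \tilde u$, and irreducibility of $u$ in $\mathbb Q[x]$ forces $u \mid v$ or $u \mid \tilde u$; the latter would yield $\tilde u = \pm u$, contradicting non-reciprocity, so $v$ must be an associate of $u$ and one may take $f = u^{2} h$. In this remaining subcase I would compare with $g_{3} = \tilde u^{2} h$ and $g_{1} = u \tilde u h$: the first equals $\tilde f$ iff $h$ is reciprocal, the second iff $uh$ is reciprocal, and both conditions together would force $u = (uh)/h$ reciprocal, a contradiction. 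Hence in every case at least one candidate delivers the required $g$.
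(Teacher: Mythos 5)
Your proof is correct, and it shares the paper's overall skeleton (replace a non-reciprocal factor by its reciprocal, then force the $0,1$ shape from the coefficient of $x^{\deg f}$ in $f\tilde f$ together with evaluation at $x=1$), but it handles the one delicate point --- that the new polynomial avoids $\{f,\tilde f\}$ --- by a genuinely different route. The paper builds a single candidate $g=w_1\tilde{w}_2$ from a decomposition $f=w_1w_2$ into two monic \emph{non-reciprocal} factors, and all of its effort goes into securing that decomposition: it sorts the irreducible factors of $f$ into pairs $p_i,\tilde{p}_i$ of non-reciprocal factors both dividing $f$, unpaired non-reciprocal factors $q_j$, and reciprocal factors $r_e$, and takes $w_1=\prod p_i^{\lambda_i}$ (or a single $q_j$ if no pairs occur); after that, $g\ne f$ and $g\ne\tilde f$ each follow from one cancellation. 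You instead use only the two hypothesized irreducible factors, write $f=uvh$, and run a three-candidate case analysis, pushing the difficulty into the elimination step: $u\tilde v=\pm v\tilde u$ together with irreducibility and primitivity forces $v=\pm u$ (after ruling out $u\mid\tilde u$, which would make $u$ reciprocal), and the residual $f=u^2h$ subcase is closed by playing $g_1$ against $g_3$. Your version is more local and avoids the factorization taxonomy entirely; the paper's produces the right $g$ in one shot. Two smaller choices in your argument are also slightly cleaner: $\tilde p(1)=p(1)$ gives $g_i(1)=f(1)$ exactly (no sign ambiguity, whereas the paper only gets $g(1)\tilde g(1)=k^2$ and must let Cauchy--Schwarz settle the sign), and the integer inequality $c^2\ge c$ replaces Cauchy--Schwarz outright. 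The only point worth making explicit is that comparing the coefficients of $x^{\deg f}$ on both sides requires $\deg g_i=\deg f$, which holds because $f(0)=1$ forces every factor of $f$ to have non-zero constant term, so $\deg\tilde u=\deg u$ and likewise for the other factors.
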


\begin{proof}
We begin by showing that there are monic non-reciprocal polynomials $w_1(x)$ and $w_2(x)$ in $\mathbb Z[x]$ such that $f(x)=w_1(x)w_2(x)$. 
Since the non-zero coefficients of $f(x)$ are positive and $f(0) = 1$, every monic factor of $f(x)$ in $\mathbb Z[x]$ cannot have positive real roots and therefore must have constant term $1$.  Therefore, for some positive integers $\lambda_i, \lambda'_i$ and $\lambda''_j$, we can write 
\[
f(x)=\prod_{i=1}^{t_1} p^{\lambda_i}_i(x){\tilde{p}}^{\lambda'_i}_i(x)\prod_{j=1}^{t_2} q^{\lambda''_j}_j(x)\prod_{e=1}^{t_3} r_e(x),
\]
where in the first product $p_i(x)$ and $\tilde{p}_i(x)$ are pairs of monic irreducible non-reciprocal factors of $f(x)$ in $\mathbb Z[x]$, in the second product $q_j(x)$ are monic irreducible non-reciprocal factors of $f(x)$ in $\mathbb Z[x]$ for which $\tilde{q}_j(x)$ is not a factor of $f(x)$, and in the third product the $r_e(x)$ are monic irreducible reciprocal factors of $f(x)$ in $\mathbb Z[x]$ which are not necessarily distinct. 
Furthermore, we take, as we can, the $2t_1+t_2$ polynomials $p_i(x)$, $\tilde{p}_i(x)$ and $q_j(x)$ to be distinct.  
If the first product is empty, then we can take $w_1(x)$ to be any irreducible factor of the second product, which will have at least two factors by assumption. If the first product is not empty, then we can take $w_1(x)$ to be $\prod_{i=1}^{t_1} p^{\lambda_i}_i(x)$.

Set $g(x)=w_1(x)\tilde{w}_2(x)$, where $f(x)=w_1(x)w_2(x)$ and where $w_1(x)$ and $w_2(x)$ are monic and non-reciprocal. Then $g(x)\not\in \{f(x),\tilde{f}(x)\}$, and $g(x)$ satisfies \eqref{non-recip}. 
As noted above, since $w_1(x)$ and $w_2(x)$ are monic factors of $f(x)$, we have $w_1(0) = w_2(0) = 1$.  It follows that $g(x)$ is monic.  
It remains to show that $g(x)$ is a $0,1$-polynomial with the same number of terms as $f(x)$. 

Write $g(x)=\sum_{i=1}^l a_ix^{m_i}$ with integers $0 \le m_0 < m_1 < \cdots < m_l$,  $a_i\neq 0$, and $a_l = 1$. Since $f(0) = 1$, so that $x$ is not a factor of $f(x)$, we see that 
\[
n = \deg f = \deg \tilde{f} = \deg g = \deg \tilde{g}.
\]
Let $k$ be the number of terms of $f(x)$. By comparing the coefficients of $x^n$ on both sides of \eqref{non-recip}, we find that $\sum_{i=1}^la_i^2=k,$ which implies that $l\leq k.$ Using \eqref{non-recip} again, we find that $k^2=f(1)\tilde{f}(1)=g(1)\tilde{g}(1).$ Together with the Cauchy-Schwarz inequality, we obtain
\[
k^2=f(1)\tilde{f}(1)=g(1)\tilde{g}(1) = \left(\sum_{i=1}^l a_i\right)^{2} \leq l\left(\sum_{i=1}^l a_i^2\right)=lk\leq k^2.  
\]
This proves that $l=k,$ and the equality in the Cauchy-Schwarz inequality implies that $g(x)$ has all its coefficients equal to 1.
\end{proof}

Let $F(x)$ be as in \eqref{F(x)eq} with $k \ge 2$, and let $G(x)$ be a $0,1$-polynomial with the same number of terms as $F(x)$ satisfying
\begin{equation}\label{constraint}
    F(x)\tilde{F}(x)=G(x)\tilde{G}(x).
\end{equation}
Since $G(x) = F(x)$ satisfies \eqref{constraint}, we know such a $G(x)$ exists.  We show that for the majority of the exponent vectors $\vec{n}$, the equation \eqref{constraint} implies that $G(x)\in\{F(x),\tilde{F}(x)\}$. Lemma~\ref{fillemma} will then imply that the majority of the vectors $\vec{n}$ are such that either the non-reciprocal part of $F(x)$ is irreducible or the non-reciprocal part of $F(x)$ is $1$. It will then remain to bound the number of vectors $\vec{n}$ such that the non-reciprocal part of $F(x)$ is $1$.

From \eqref{F(x)eq} and \eqref{constraint}, we have $F(0) = G(0) = 1$.  
Write $F(x)=\sum_{i=0}^k x^{n_i}$ and $G(x)=\sum_{i=0}^kx^{m_i}$, where $0 = n_0 < n_1 < \cdots < n_k \le N$ and $0 = m_0 < m_1 < \cdots < m_k$. Set $d = n_k$. From \eqref{constraint}, we deduce that 
\begin{equation}\label{firstdiff}
m_k=n_k=d.
\end{equation}
We then see that 
\[
\tilde{F}(x)=\sum_{i=0}^{k} x^{d-n_i}
\qquad \text{and} \qquad
\tilde{G}(x)=\sum_{i=0}^kx^{d-m_i},
\]
where here the monomials show up in descending degree order.

With the above notation, 
we have that the second largest degree of a  monomial on the left-hand side of \eqref{constraint} is either $2d-n_1$ or $n_{k-1}+d$. 
Observe that if these two values are equal, then $n_1 + n_{k-1} = d = n_k$, and the total number of vectors $\vec{n} = \langle n_1, \ldots, n_k \rangle$ satisfying this equation is at most $N^{k-1}$. The quantity $N^{k-1}$ is small compared to the total number of possibilities for $F(x)$ as in \eqref{F(x)eq}, which is $\binom{N}{k}$ with $k$ fixed.
So we restrict now to the case that $2d-n_1 \ne n_{k-1}+d$.  
We will use this idea again, that if we have a linear combination of the $n_1, \ldots, n_k$, with coefficients not all $0$, with the linear combination equal to $0$, then the vectors $\vec{n}$ satisfying this equation account for $O(N^{k-1})$ of the $\binom{N}{k}$ different $F(x)$ in \eqref{F(x)eq}.

Next, we note that we can suppose that the second largest degree of a monomial on the left-hand side of \eqref{constraint} is $2d-n_1$. 
Otherwise, we can redefine the $n_i$ by interchanging the roles of $F(x)$ and $\tilde{F}(x)$ so that equation \eqref{F(x)eq} still holds and the second largest degree of a monomial on the left-hand side of \eqref{constraint} is $2d-n_1$, as desired.  Similarly, by interchanging the roles of $G(x)$ and $\tilde{G}(x)$ if need be, we can suppose that the second largest degree of a monomial on the right-hand side of \eqref{constraint} is $2d-m_1$. 
Doing so, we deduce then that $n_{1} = m_{1}$ or, equivalently, that
\begin{equation}\label{secdiff}
n_k-n_1=m_k-m_1.
\end{equation}
The goal is to show that for all but $O_k(N^{k-1})$ vectors $\vec{n}$, the remaining $m_k-m_i$, with $2\leq i\leq k-1$, satisfy
\[
m_k-m_i = n_k-n_i.
\]
This will then demonstrate, since \eqref{firstdiff} holds, that $G(x)=F(x)$.

Suppose there is some $i \in \{ 2, \ldots, k-1 \}$ such that $m_k-m_i \ne n_k-n_i$. Since the product on the right-hand side of \eqref{constraint} will include a term of degree $m_k-m_i$ which also must appear on the left-hand side of \eqref{constraint}, we deduce there exist $(g,h) \ne (k,i)$ such that $g > h$ and
\begin{equation}\label{thirddiff}
    m_k-m_i=n_g-n_h.
\end{equation}
Here, it is possible that $n_h=n_0=0$. We consider the possibility that $g\neq k$ in \eqref{thirddiff}.  Either $n_h\neq n_1$ or $n_h\neq n_0=0$. If $n_h\neq n_1,$ then taking the difference of \eqref{secdiff} and \eqref{thirddiff}, we find that
\begin{equation}\label{anothereq}
m_i-m_1 = n_k+n_h-n_g-n_1.
\end{equation}
Here, we have $k > g > h$ and $n_1 \not\in \{ n_h, n_k \}$, so the right-hand side of \eqref{anothereq} is a linear combination of at least $3$ distinct $n_i$. 
There is some term on the right-hand side of \eqref{constraint} of degree $d + m_i - m_1$ which must equal a term on the left-hand side of \eqref{constraint} of degree $d + n_u - n_v$ for some $u$ and $v$, so that the left-hand side of \eqref{anothereq} can be replaced by a difference in two components of $\vec{n}$. As noted earlier, the resulting equation in the $n_i$ implies there are $O_{k}(N^{k-1})$ vectors $\vec{n}$ for which $n_h\neq n_1$ in \eqref{thirddiff}.

With still $g\neq k$, we suppose now $n_h \ne n_0$ or, equivalently, $h \ne 0$.
In this case, we take the difference of \eqref{firstdiff} and \eqref{thirddiff}, leading to \eqref{anothereq} but with $m_1$ and $n_1$ replaced by $m_0$ and $n_0$, respectively.  Proceeding as we did for $n_h \ne n_1$, we deduce again that there are $O_{k}(N^{k-1})$ vectors $\vec{n}$ for which $n_h\neq n_0$ in \eqref{thirddiff}.

We can deduce now that for all but $O_k(N^{k-1})$ vectors $\vec{n}$, if \eqref{thirddiff} holds for some $i \in \{ 2, \ldots, k-1 \}$ and some $g$ and $h$, then we must have $g = k$. Recalling \eqref{firstdiff} and \eqref{secdiff}, we deduce then that, except for $O_k(N^{k-1})$ exceptions of the $\vec{n}$, for each $m_k - m_i$ with $i \in \{ 0, 1, \ldots, k-1 \}$, there is an $h \in \{ 0, 1, \ldots, k-1 \}$ such that $m_k - m_i = n_k - n_h$.  The inequalities
\[
m_k - m_0 > \ldots > m_k - m_{k-1}
\qquad \text{and} \qquad
n_k - n_0 > \ldots > n_k - n_{k-1}
\]
now imply that, outside $O_k(N^{k-1})$ exceptions, we have $m_i = n_i$ for every $i \in \{ 0, 1, \ldots, k-1 \}$, implying $G(x) = F(x)$. 

Lemma~\ref{fillemma} implies now that for all but $O_k(N^{k-1})$ of the vectors $\vec{n}$,  either the non-reciprocal part of $F(x)$ is irreducible or the non-reciprocal part of $F(x)$ is $1$. It remains to bound the number of vectors $\vec{n}$ such that the non-reciprocal part of $F(x)$ is $1$.  However, if the non-reciprocal part of $F(x)$ is $1$, then $F(x)$ is necessarily reciprocal so that $n_{k} - n_{k-1} = n_{1}$. Thus, the non-reciprocal part of $F(x)$ is $1$ for at most $O(N^{k-1})$ vectors $\vec{n}$.  Lemma~\ref{NRlemma} follows.


\section{Preliminary information for analyzing reciprocal factors}

For an integer $n$ and a positive integer $d$, we use the notation $n \mmod d$ to refer to the unique integer $a \in \{ 0,1,\ldots,d-1 \}$ such that $n \equiv a \pmod{d}$.  
The following lemma is a variation of a lemma in \cite{ffk} (see also \cite{fil2023}).

\begin{lemma}\label{ffklemma}
Let $\r$ and $N$ be positive integers.  
Let $\varepsilon = \varepsilon(\r,N) \in (0,1/2]$.  
Set  $\kappa = \lfloor 1/\varepsilon \rfloor + 1$ and
\[
V(\r) = \max \bigg\{ \dfrac{\kappa^{2\r-2}}{2} \bigg( \varepsilon - \dfrac{1}{\kappa}  \bigg)^{-1}, 
 \kappa^{\r-1} + \varepsilon \bigg\}.
\]  
If $\a_{1}, \a_{2}, \dots, \a_{\r}$ are nonnegative integers satisfying
$\a_{1} < \a_{2} < \cdots < \a_{\r} \le N$ and $\a_{\r} \ge V(\r)$, then 
there exists an integer  
\begin{equation}\label{ffkpap2}
d \in \bigg( \dfrac{\a_{\r}}{\kappa^{\r-1} + \varepsilon}, \dfrac{\a_{\r}}{1 - \varepsilon} \bigg)
\end{equation}
such that 
$\a_{j} \mmod d$ is in $[0,\varepsilon d) \cup ((1-\varepsilon) d,d)$
for every $j \in \{1,2,\dots,\r\}$.
\end{lemma}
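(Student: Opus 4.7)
My plan is to construct the desired $d$ as the integer nearest to $n_k/q$, where $q$ is a small positive integer serving as a common ``denominator'' for the rationals $n_j/n_k$, obtained by a Dirichlet-style pigeonhole argument. The underlying idea is that if $q n_j/n_k$ lies within $1/\kappa$ of an integer $m_j$ for every $j$, then $n_j$ is close to the multiple $m_j d$ of $d$, and so the hypothesis on $n_k$ needs only to be large enough to make all the error terms negligible compared to $\varepsilon d$.

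To produce $q$, I would partition the unit cube $[0,1)^{k-1}$ into $\kappa^{k-1}$ sub-cubes of side $1/\kappa$ and consider the $\kappa^{k-1}+1$ vectors
\[
v_{q} = \bigl(\{q n_1/n_k\}, \ldots, \{q n_{k-1}/n_k\}\bigr), \qquad q = 0, 1, \ldots, \kappa^{k-1}.
\]
By the pigeonhole principle, two of them, say $v_{q_1}$ and $v_{q_2}$ with $q_1 < q_2$, lie in the same sub-cube, and setting $q := q_2 - q_1$ gives $1 \le q \le \kappa^{k-1}$ with $\|q n_j/n_k\| < 1/\kappa$ for every $j \in \{1,\ldots,k-1\}$; the analogous condition for $j=k$ is automatic. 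Letting $d$ be the integer nearest $n_k/q$ (so $|n_k - qd| \le q/2$) and $m_j$ the integer nearest $n_j/d$, the triangle inequality combined with $m_j \le q$ yields
\[
|n_j - m_j d| \;\le\; \frac{n_k}{q\kappa} + \frac{m_j}{2} \;\le\; \frac{n_k}{q\kappa} + \frac{q}{2}.
\]

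What remains is to verify, using the hypothesis $n_k \ge V(k)$, the two facts: (i)~$d$ belongs to the interval \eqref{ffkpap2}, and (ii)~$|n_j - m_j d| < \varepsilon d$ for every $j$, which is equivalent to the membership condition $n_j \mmod d \in [0, \varepsilon d) \cup ((1-\varepsilon)d, d)$. The main obstacle is ensuring that a single hypothesis on $n_k$ simultaneously handles both constraints, and the two summands in the definition of $V(k)$ are engineered for exactly this purpose: the first summand $\kappa^{2k-2}/(2(\varepsilon - 1/\kappa))$ arises from requiring $n_k/(q\kappa) + q/2 < \varepsilon d$ in the worst case $q = \kappa^{k-1}$ (where positivity of $\varepsilon - 1/\kappa$, guaranteed by $\kappa = \lfloor 1/\varepsilon \rfloor + 1$, is essential), while the second summand $\kappa^{k-1} + \varepsilon$ comes from demanding that $d = n_k/q + O(1)$ remain within the endpoints of \eqref{ffkpap2} at the extremes $q = 1$ and $q = \kappa^{k-1}$. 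Once these two numerical inequalities are checked, the conclusion $n_j \mmod d \in [0,\varepsilon d)\cup((1-\varepsilon)d,d)$ follows at once by examining the sign of $n_j - m_j d$.
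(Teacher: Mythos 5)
Your proposal follows essentially the same route as the paper: a Dirichlet pigeonhole on the points $(\{q n_1/n_k\},\ldots,\{q n_{k-1}/n_k\})$ to produce $q\le\kappa^{k-1}$ with $\Vert qn_j/n_k\Vert<1/\kappa$ for all $j$, followed by taking $d$ close to $n_k/q$. The pigeonhole step and the reduction of the mod-$d$ condition to $|n_j-m_jd|<\varepsilon d$ are exactly as in the paper.

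The one place where your execution diverges, and where the details do not quite close with the stated constant, is the choice of $d$. Rounding $n_k/q$ to the nearest integer costs $|n_k/q-d|\le 1/2$, which propagates to an extra $m_j/2\le q/2$ in your bound $|n_j-m_jd|\le n_k/(q\kappa)+q/2$ \emph{and} shrinks the target, since $\varepsilon d\ge \varepsilon n_k/q-\varepsilon/2$. Working this out, your construction needs $n_k\ge q(q+\varepsilon)/\bigl(2(\varepsilon-1/\kappa)\bigr)$, which in the worst case $q=\kappa^{k-1}$ exceeds the first summand $\kappa^{2k-2}/\bigl(2(\varepsilon-1/\kappa)\bigr)$ of $V(k)$ by the lower-order amount $\varepsilon\kappa^{k-1}/\bigl(2(\varepsilon-1/\kappa)\bigr)$ --- which is not negligible here, since $\varepsilon-1/\kappa$ is tiny in the intended application. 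So the first summand of $V(k)$ is \emph{not} engineered for the inequality $n_k/(q\kappa)+q/2<\varepsilon d$ with $d$ the nearest integer to $n_k/q$, contrary to your claim. The paper sidesteps this by choosing $d$ to be \emph{any} integer in the interval $\bigl(n_k(q+1/\kappa)/(q(q+\varepsilon)),\, n_k(q-1/\kappa)/(q(q-\varepsilon))\bigr)$, whose length $2n_k(\varepsilon-1/\kappa)/(q^2-\varepsilon^2)$ exceeds $1$ precisely under the hypothesis $n_k\ge \kappa^{2k-2}/\bigl(2(\varepsilon-1/\kappa)\bigr)$; membership in that interval gives $|n_j-m_jd|<\varepsilon d$ directly, with no rounding loss. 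Your argument is salvageable either by adopting that choice of $d$ or by marginally enlarging $V(k)$, but as written it proves a very slightly weaker statement. (Two cosmetic points: define $m_j$ as the nearest integer to $qn_j/n_k$ rather than to $n_j/d$, since that is the integer your triangle inequality actually controls; and note that the case $m_j=0$, where the conclusion reads $n_j<\varepsilon d$, still goes through because $n_j<n_k/(q\kappa)<\varepsilon d$ when $\varepsilon>1/\kappa$ --- the paper treats this case separately.)
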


\begin{proof}
For $\r = 1$, we have $\a_{\r} \ge V(\r) \ge \kappa^{\r-1} + \varepsilon > 1$, so we can take $d = \a_{\r}$.  
We consider now the case that $\r > 1$.   
Since $\varepsilon \le 1/2$, we have $\kappa = \lfloor 1/\varepsilon \rfloor + 1 \ge 3$.
By the definition of $\kappa$, we have $\kappa > 1/\varepsilon$ or, equivalently, $\varepsilon > 1/\kappa$.  
The condition $\a_{\r} \ge V(\r)$ implies that
\begin{equation}\label{dirichletboxlemeq1}
\dfrac{\a_{\r}}{\kappa^{\r-1} + \varepsilon} \ge 1 \qquad \text{ and } \qquad
\sqrt{2 \a_{\r}} \,\bigg( \varepsilon - \dfrac{1}{\kappa} \bigg)^{1/2} \ge \kappa^{\r-1}.
\end{equation}
Note that the conditions in the lemma and $\r > 1$ imply that $d$ exists as in \eqref{ffkpap2} since, by the first inequality in \eqref{dirichletboxlemeq1}, the interval in \eqref{ffkpap2} has length 
\[
\dfrac{\a_{\r} (\kappa^{\r-1} + 2\varepsilon - 1)}{(\kappa^{\r-1} + \varepsilon)(1 - \varepsilon)}
\ge \dfrac{\kappa + 2\varepsilon - 1}{1 - \varepsilon}
\ge  2\cdot \dfrac{1 + \varepsilon}{1 - \varepsilon} > 1.
\]
To establish the lemma, it suffices to show that
there is an integer $d$ satisfying \eqref{ffkpap2} and integers $\c_{1}, \c_{2}, \dots, \c_{\r}$ 
such that
\begin{equation}\label{ffkpap3}
|\a_{j} - \c_{j} d| < \varepsilon d \qquad \text{ for } 1 \le j \le \r. 
\end{equation}

Let $x_{j} = \a_{j}/\a_{\r}$ for $j \in \{1,2,\dots,\r\}$. 
We explain next how the
Dirichlet box principle implies that there is a positive integer $t \le \kappa^{\r-1}$ satisfying  
\begin{equation}\label{ffkpap4}
\Vert t x_{j} \Vert < \dfrac{1}{\kappa} \qquad \text{ for } 1 \le j \le \r-1,
\end{equation}
where $\Vert x \Vert$ denotes the distance from $x$ to the nearest integer.
For each $t' \in \{ 1, 2, \dots, \kappa^{\r-1}+1 \}$, we define the
point 
\[
P(t') = \big( \{ t'x_{1} \}  , \{ t'x_{2} \}, \dots, \{ t'x_{\r-1} \} \big),
\] 
with coordinates in the interval $[0,1)$.  
For each of the $\kappa^{\r-1}$ choices of $u_{j} \in \{ 0, 1, \ldots, \kappa -1 \}$, where $1 \le j \le \r-1$, 
we consider the cube
\begin{align*}
\mathcal C(u_{1}, \dots, u_{\r-1}) = 
\{ (s_{1}, s_{2}, \dots, s_{\r-1}) : u_{j}/\kappa \le s_{j} < &(u_{j}+1)/\kappa \\ &\text{ for } 1 \le j \le \r-1  \}.
\end{align*}
By the Dirichlet box principle, there are $t_{1}'$ and $t_{2}'$ in 
$\{ 1, 2, \ldots, \kappa^{\r-1}+1 \}$
with $t_{1}' > t_{2}'$
such that the points $P(t_{1}')$ and $P(t_{2}')$ are both in the same cube
$C(u_{1}, \dots, u_{\r-1})$.  Set 
$t = t_{1}'-t_{2}' \in \{ 1, 2, \ldots, \kappa^{\r-1} \}$.
For every $j \in \{ 1, 2, \dots, \r-1 \}$, we obtain
\[
t x_{j} = t_{1}'x_{j} - t_{2}'x_{j} 
= \lfloor t_{1}'x_{j} \rfloor - \lfloor t_{2}'x_{j} \rfloor 
+ \{ t_{1}'x_{j} \} - \{ t_{2}'x_{j} \}.
\]
It follows that each such $t x_{j}$ is within 
$|\{ t_{1}'x_{j} \} - \{ t_{2}'x_{j} \}| < 1/\kappa$
of the integer $\lfloor t_{1}'x_{j} \rfloor - \lfloor t_{2}'x_{j} \rfloor$.  
This establishes \eqref{ffkpap4}.  
Since $x_{k} = 1$, we see that \eqref{ffkpap4} holds with $j = \r$ as well.  
Observe that \eqref{dirichletboxlemeq1} implies that 
\begin{equation}\label{dupperbd}
t \le \sqrt{2 \a_{\r}} \,\bigg( \varepsilon - \dfrac{1}{\kappa} \bigg)^{1/2}.
\end{equation}
For $1 \le j \le \r$, take $\c_{j}$ to be the nearest
integer to $t x_{j}$. 

For the moment, suppose $\c_{j} \ne 0$ (so that $\c_{j} \ge 1$) for each 
$j \in \{1,2,\dots,\r\}$.  Then \eqref{ffkpap3} follows provided
\[
\dfrac{d}{\a_{\r}} \in \bigcap_{1 \le j \le \r} 
\bigg( \dfrac{x_{j}}{\c_{j} + \varepsilon}, \dfrac{x_{j}}{\c_{j} - \varepsilon} \bigg).
\]
Recall $\varepsilon > 1/\kappa$.  
For each $j \in \{1,2,\dots,\r\}$, since $t x_{j} \le t$ so that $\c_{j} \le t$, we deduce from \eqref{ffkpap4} that
\[
\dfrac{t+(1/\kappa)}{t+\varepsilon} \ge \dfrac{\c_{j}+(1/\kappa)}{\c_{j}+\varepsilon} 
> \dfrac{t x_{j}}{\c_{j}+\varepsilon}
\]
and
\[
\dfrac{t-(1/\kappa)}{t-\varepsilon} \le \dfrac{\c_{j}-(1/\kappa)}{\c_{j}-\varepsilon} 
< \dfrac{t x_{j}}{\c_{j}-\varepsilon}.
\]
Hence, \eqref{ffkpap3} holds provided
\begin{equation}\label{ffkpap5}
\dfrac{d}{\a_{\r}} \in 
\bigg( \dfrac{t+(1/\kappa)}{t(t+\varepsilon)}, \dfrac{t-(1/\kappa)}{t(t-\varepsilon)} \bigg).
\end{equation}
Using \eqref{dupperbd}, we see that the length of the interval on the right is 
\[
\dfrac{2}{t^{2}-\varepsilon^{2}} \bigg( \varepsilon - \dfrac{1}{\kappa} \bigg) 
> \dfrac{2}{t^{2}} \bigg( \varepsilon - \dfrac{1}{\kappa} \bigg) \ge \dfrac{1}{\a_{\r}}
\]
so that $d$ exists satisfying \eqref{ffkpap5}.  
From \eqref{ffkpap5} and $t \in \{ 1, 2, \ldots, \kappa^{\r-1} \}$, we see that
\[
\dfrac{d}{\a_{\r}} > \dfrac{t+(1/\kappa)}{t(t+\varepsilon)} > \dfrac{1}{t+\varepsilon} \ge \dfrac{1}{\kappa^{\r-1} +\varepsilon}
\]
and
\[
\dfrac{d}{\a_{\r}} < \dfrac{t-(1/\kappa)}{t(t-\varepsilon)} < \dfrac{1}{t-\varepsilon} \le \dfrac{1}{1-\varepsilon}.
\]
Thus, we also see that \eqref{ffkpap2} holds.  

Now, suppose some $\c_{j} = 0$ with $j \in \{1,2,\dots,\r\}$.  We again choose
$d$ so that \eqref{ffkpap5} holds and, hence, \eqref{ffkpap2} as well.  For each $\c_{j} \ne 0$, the above argument gives 
$|\a_{j}-\c_{j}d| < \varepsilon d$ as in \eqref{ffkpap3}.  On the other hand, if $\c_{j} = 0$, then
\eqref{ffkpap4} and the definitions of $\c_{j}$, $x_{j}$, and $d$ imply
\[
\kappa t \a_{j} < \a_{\r} < d 
\dfrac{t \big( t + \varepsilon \big)}{t + (1/\kappa)}
\le \kappa \varepsilon t d.
\]
Hence, \eqref{ffkpap3} holds for such $\c_{j}$ as well, completing the proof.
\end{proof}

The above lemma serves the same purpose as Lemma~1 in \cite{sch7}, where instead Schinzel makes use of work of E.~Bombieri and J.~D.~Vaaler \cite{bombvaaler} and of E.~Hlawka \cite{hlawka}.  

We will want to make use of the \textit{Newton polytope} (or \textit{Newton polyhedron}) associated with a multivariate polynomial, so we briefly discuss this topic next. 
Let $x_1,\dots,x_r$ be indeterminates, and let $P(x_1,\dots,x_r)\in K[x_1,\dots,x_r]$ be a polynomial in $r$ variables over a field $K$. We write
\[
P(x_1,\dots,x_r) = \sum_{i=1}^{s} \alpha_{i} \prod_{j=1}^{r} x_j^{c_{ij}},
\]
where $\alpha_i \in K\backslash \{ 0 \}$ and $c_{i,j} \in \mathbb Z^{+} \cup \{ 0 \}$.  Define the points $p_{i} = (c_{i1},\dots,c_{ir})$, for $1 \le i \le s$.  The \textit{Newton polytope associated with $P(x_1,\dots,x_r)$} is the convex hull of the points $p_{i}$, which can be written as
\[
\mathcal N(P)=\left\{ \sum_{i=1}^{s} t_i p_i : t_i \in [0,1], \sum_{i=1}^{s}t_i=1 \right\},
\]
where scalar multiplication and addition of points is performed coordinate-wise in the same way as vectors.  
In the case of two variables where $r = 2$, the boundary of the convex hull $\mathcal N(P)$ is referred to as the \textit{Newton polygon} associated with $P(x_1,\dots,x_r)$.  

A classical result for Newton polytopes due to A.~M.~Ostrowski~\cite{ostrowski} is the following (cf. Theorem~18, p.~89, of \cite{schbook}).

\begin{lemma}[Ostrowski, 1975]\label{ostrowskithm}
If $P(x_1,\dots,x_r)$ and $Q(x_1,\dots,x_r)$ are two non-zero polynomials in $K[x_1,\dots,x_r]$, then
\[
\mathcal N(P Q) = \mathcal N(P) + \mathcal N(Q),
\] 
where the sum on the right is the \textit{Minkowski sum} defined by
\[
\mathcal N(P) + \mathcal N(Q) = \{ p + q : p \in \mathcal N(P), q \in \mathcal N(Q) \}.
\]
\end{lemma}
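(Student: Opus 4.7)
The plan is to prove the two inclusions separately. The inclusion $\mathcal N(PQ) \subseteq \mathcal N(P) + \mathcal N(Q)$ is essentially formal: every exponent vector appearing in a monomial of $PQ$ has the form $p_i + q_j$, with $p_i$ an exponent vector of $P$ and $q_j$ an exponent vector of $Q$, so it lies in the Minkowski sum. Since the Minkowski sum of two convex sets is convex, and $\mathcal N(PQ)$ is by definition the convex hull of the exponent vectors of the monomials of $PQ$, the inclusion is immediate.

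For the reverse inclusion $\mathcal N(P) + \mathcal N(Q) \subseteq \mathcal N(PQ)$, I plan to reduce to showing that every \emph{vertex} of the convex polytope $\mathcal N(P) + \mathcal N(Q)$ appears as an exponent vector of a monomial of $PQ$ with nonzero coefficient; since a polytope equals the convex hull of its vertices and $\mathcal N(PQ)$ is convex, this will suffice. Fix such a vertex $v$ and choose a linear functional $\ell : \mathbb R^r \to \mathbb R$ whose maximum on $\mathcal N(P) + \mathcal N(Q)$ is attained uniquely at $v$, as one may for any vertex of a convex polytope. By linearity of $\ell$, the maximum over the Minkowski sum decomposes as
\[
\max_{u \in \mathcal N(P) + \mathcal N(Q)} \ell(u) = \max_{p \in \mathcal N(P)} \ell(p) + \max_{q \in \mathcal N(Q)} \ell(q),
\]
and the uniqueness of the maximizer $v$ on the left forces the maxima on the right to be attained at unique points $p \in \mathcal N(P)$ and $q \in \mathcal N(Q)$, necessarily vertices, with $v = p + q$.

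The key step, and the main obstacle, is a non-cancellation argument. The coefficient of $x^v$ in $PQ$ is a sum of products $\alpha_{p'} \beta_{q'}$ over all pairs of exponent vectors with $p' + q' = v$; the inequalities $\ell(p') \le \ell(p)$ and $\ell(q') \le \ell(q)$, combined with the equality $\ell(p') + \ell(q') = \ell(v) = \ell(p) + \ell(q)$, force $\ell(p') = \ell(p)$ and $\ell(q') = \ell(q)$, and then the uniqueness of the maximizers in each summand forces $p' = p$ and $q' = q$. Hence the sum collapses to the single product $\alpha_p \beta_q$, which is nonzero because $K$ is a field. This is the point at which the convex-geometric data is coupled to the algebra, and it is essential that $K$ be at least an integral domain; everything else is convex-polytope bookkeeping invoked as standard.
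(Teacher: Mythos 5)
Your proof is correct and complete: the easy inclusion via convexity of the Minkowski sum, plus the supporting-functional argument showing each vertex of $\mathcal N(P)+\mathcal N(Q)$ survives as an actual monomial of $PQ$ (with the non-cancellation step using that $K$ has no zero divisors), is exactly the standard proof of Ostrowski's theorem. The paper itself gives no proof of this lemma --- it is quoted as a classical result with references to Ostrowski and to Theorem~18 of Schinzel's book --- so there is nothing to compare against beyond noting that your argument matches the one found in those sources.
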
 

Next, we provide another well-known result pertaining to the Minkowski sum of two convex polytopes in $\mathbb R^{2}$ (see for example Theorem~13.5 and its proof in \cite{compgeometry}).  
To describe the result, it is helpful to give some terminology for the edges (sides) of a polytope in $\mathbb R^{2}$.  
We say an edge of a polytope is a lower (an upper) edge if the polytope lies on or above (on or below) the line formed by extending that edge.  
It is also possible that an edge is a vertical line segment, and we say an edge is a left vertical (a right vertical) edge if the polytope lies on or to the right of (on or to the left of) the line formed by extending that edge.  
Finally, in the context of Newton polytopes in $\mathbb R^{2}$, it is possible that a polytope is a line segment. 
In this case, we view the polytope as having two edges, a lower edge and an upper edge (or a left vertical edge and a right vertical edge).  
Note that some types of edges may not exist for a given polytope.  For example, if a polytope is just a vertical line segment, then it has a left vertical edge and a right vertical edge but does not have lower or upper edges.  
The following result describes the Minkowski sum of two polytopes in $\mathbb R^{2}$ in terms of translates of the edges of the two polytopes.

\begin{lemma}\label{compgeom}
If $P(x_{1},x_{2})$ and $Q(x_{1},x_{2})$ are two non-zero polynomials in $K[x_{1},x_{2}]$, then the boundary of the Minkowski sum
$\mathcal N(P) + \mathcal N(Q)$ can be formed by translating the edges of $\mathcal N(P)$ and $\mathcal N(Q)$ in such a way that the lower (upper, left vertical, right vertical) edges along $\mathcal N(P)$ and $\mathcal N(Q)$ are translated to form the lower (upper, left vertical, right vertical, respectively) edges along $\mathcal N(P) + \mathcal N(Q)$ without overlapping and with the slopes of the edges of $\mathcal N(P) + \mathcal N(Q)$ increasing (decreasing, undefined, undefined, respectively) going from left to right.  
\end{lemma}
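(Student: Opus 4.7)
The plan is to deduce the lemma from the \emph{support function} characterization of Minkowski sums. For a non-empty compact convex set $A \subseteq \mathbb R^{2}$ and a unit vector $\vec u \in \mathbb R^{2}$, let $h_{A}(\vec u) = \max_{\vec a \in A} \vec u \cdot \vec a$, and let $F_{A}(\vec u)$ denote the face of $A$ on which this maximum is attained. A short verification, using only the definitions, gives the key identity
\[
F_{\mathcal N(P) + \mathcal N(Q)}(\vec u) = F_{\mathcal N(P)}(\vec u) + F_{\mathcal N(Q)}(\vec u).
\]
The boundary of $\mathcal N(P) + \mathcal N(Q)$ is the union of these faces as $\vec u$ varies over unit vectors, and an edge of the sum is precisely what one obtains for those $\vec u$ for which at least one of $F_{\mathcal N(P)}(\vec u)$ or $F_{\mathcal N(Q)}(\vec u)$ is one-dimensional, that is, is an edge of the corresponding summand with outward normal $\vec u$.

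Next, I would translate this into the stated description of the edges. A lower edge of a convex polytope in $\mathbb R^{2}$ is by definition a face $F_{A}(\vec u)$ whose outward unit normal $\vec u$ has strictly negative second coordinate, so lower edges of the Minkowski sum arise exactly from sums of lower faces of $\mathcal N(P)$ and $\mathcal N(Q)$. Enumerate the lower edges of $\mathcal N(P)$ and of $\mathcal N(Q)$ in order of their slopes, which, by convexity, are strictly increasing from left to right inside each polytope. As $\vec u$ rotates through the lower half-plane in the direction that turns the slope of the supported edge upward, each lower edge of $\mathcal N(P)$ and $\mathcal N(Q)$ becomes the supporting face in order of increasing slope. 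If at a given $\vec u$ only $\mathcal N(P)$ has an edge normal to $\vec u$, the boundary of the sum receives a translate of that edge; if both summands do, which forces the two edges to be parallel, the boundary receives the single segment $e_i^P + e_j^Q$, of the shared slope and of length equal to the sum of the two lengths. Concatenating these contributions in the order dictated by $\vec u$ produces the lower boundary of $\mathcal N(P) + \mathcal N(Q)$ as a chain of non-overlapping translates of the lower edges of $\mathcal N(P)$ and $\mathcal N(Q)$ with strictly increasing slopes, as claimed.

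The statement for upper edges follows by the same argument with $\vec u$ ranging over unit vectors with strictly positive second coordinate, for which convexity forces slopes of upper edges to be strictly decreasing from left to right. The left and right vertical edges correspond to the two horizontal directions $\vec u = (-1,0)$ and $\vec u = (1,0)$ respectively, where $F_{\mathcal N(P)}(\vec u)$ and $F_{\mathcal N(Q)}(\vec u)$ are each either a single vertex or a vertical segment, and their Minkowski sum is either absent or a vertical segment on $\mathcal N(P) + \mathcal N(Q)$; a slope is undefined in this case. The main point requiring care is the treatment of parallel edges from the two summands merging into a single edge of the sum, together with the degenerate possibilities in which $\mathcal N(P)$ or $\mathcal N(Q)$ is itself a line segment or a point; both are handled uniformly by the identity displayed above together with the convention, recorded just before the lemma, that a line segment is viewed as having a lower and an upper edge (or a left and a right vertical edge). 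This case analysis, rather than any deeper geometric fact, is the principal obstacle to a clean write-up.
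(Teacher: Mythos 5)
Your proposal is correct. The paper itself offers no proof of this lemma, deferring instead to Theorem~13.5 of \cite{compgeometry}; your support-function argument (the identity $F_{\mathcal N(P)+\mathcal N(Q)}(\vec u)=F_{\mathcal N(P)}(\vec u)+F_{\mathcal N(Q)}(\vec u)$ followed by sweeping the outward normal through the lower and upper half-planes and merging parallel edges) is precisely the standard proof of that cited result, so it matches the intended argument, including the handling of the degenerate segment/point cases via the paper's edge conventions.
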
 

To clarify, if a lower (upper, left vertical, right vertical) edge of length $\ell_{1}$ on $\mathcal N(P)$ is parallel to a lower (upper, left vertical, right vertical) edge of length $\ell_{2}$ on $\mathcal N(Q)$, then they are translated to form a lower (upper, left vertical, right vertical) edge of length $\ell_{1} + \ell_{2}$ on $\mathcal N(P) + \mathcal N(Q)$.  Below we illustrate Lemmas~\ref{ostrowskithm} and \ref{compgeom} with an example of two Newton polytopes and their Minkowski sum.

\begin{figure}[h!]
        \centering
        \includegraphics[height=5cm]{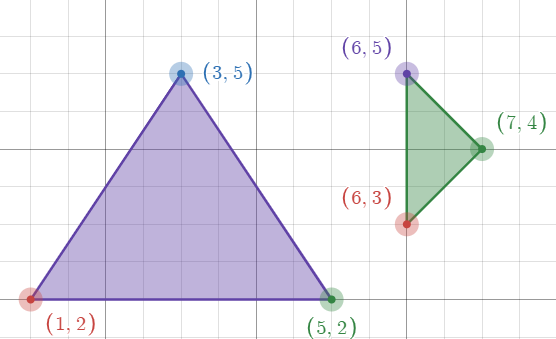}
        \qquad
        \includegraphics[height=5cm]{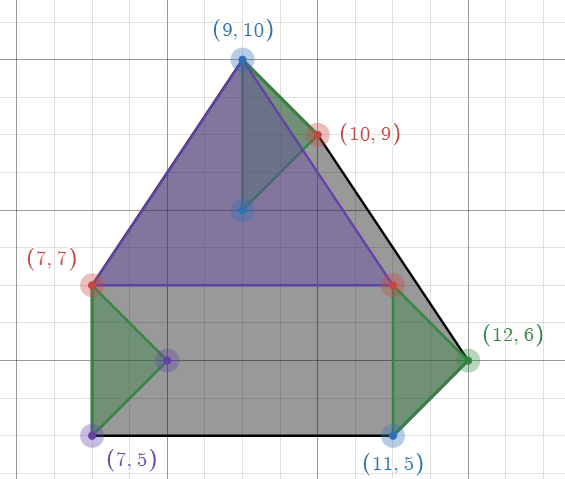}
        \caption*{Figure:  \parbox[t]{10cm}{The Newton Polytopes for $P(x,y) = xy^2+x^5y^2+x^3y^5$, $Q(x,y) = x^6y^3+x^7y^4+x^6y^5$ and $P(x,y) \cdot Q(x,y)$}}
        \label{fig:enter-label}
\end{figure}


\section{Finishing the proof of Theorem~\ref{schinzelweakthm}}

As a result of Lemma~\ref{NRlemma}, to complete the proof of Theorem~\ref{schinzelweakthm}, we will show that there are $o(N^k)$ polynomials $F(x)$ as in \eqref{F(x)eq} with $n_k \le N$ for which $F(x)$ has an irreducible non-cyclotomic reciprocal factor.  
We take $\r \ge 2$ to be fixed and view $N$ as sufficiently large compared to $\r$.
The conditions in Lemma~\ref{ffklemma} require $V(\r) \le n_{\r} \le N$ so that, in particular, to use the lemma, we will want
\begin{equation}\label{commenteq}
N \ge V(\r) = \max \bigg\{ \dfrac{\kappa^{2\r-2}}{2} \bigg( \varepsilon - \dfrac{1}{\kappa}  \bigg)^{-1}, 
 \kappa^{\r-1} + \varepsilon \bigg\}.
\end{equation}
A hurdle to overcome is how small the difference of $\varepsilon$ and $1/\kappa$ is as this difference appears on the right-hand side of \eqref{commenteq}. 
We take $\varepsilon = 1/\big(\lfloor N^{1/(2\r+1)} \rfloor - 1\big)$.  
Then 
\begin{equation}\label{kappabd}
\kappa = \lfloor 1/\varepsilon \rfloor + 1 = \lfloor N^{1/(2\r+1)} \rfloor \sim N^{1/(2\r+1)},
\end{equation}
as $N \rightarrow \infty$, and we obtain
\[
\varepsilon - \dfrac{1}{\kappa} = \dfrac{1}{\big( \lfloor N^{1/(2\r+1)} \rfloor - 1 \big) \lfloor N^{1/(2\r+1)} \rfloor}
\ge \dfrac{1}{\big( N^{1/(2\r+1)} - 1 \big) N^{1/(2\r+1)}} > \dfrac{1}{N^{2/(2\r+1)}}.
\]
It follows that
\[
\dfrac{\kappa^{2\r-2}}{2} \bigg( \varepsilon - \dfrac{1}{\kappa}  \bigg)^{-1}
< \dfrac{\lfloor N^{1/(2\r+1)} \rfloor^{2\r-2}}{2} N^{2/(2\r+1)}
\le \dfrac{N^{2\r/(2\r+1)}}{2} < N^{2\r/(2\r+1)}.
\]
Since $\r \ge 2$ is fixed and $N$ is sufficiently large, we see that
\[
\kappa^{\r-1} + \varepsilon 
\le \lfloor N^{1/(2\r+1)} \rfloor^{\r-1} + \dfrac{1}{\big(\lfloor N^{1/(2\r+1)} \rfloor - 1\big)} 
\le \sqrt{N} < N^{2\r/(2\r+1)}.
\]
Thus, with $\varepsilon = 1/\big(\lfloor N^{1/(2\r+1)} \rfloor - 1\big)$, we obtain that $V(\r) < N^{2\r/(2\r+1)}$ and \eqref{commenteq} follows.
With $\varepsilon$ as above, we discard from consideration the $\le V(k)^{k} < N^{2k^2/(2\r+1)} = N^{\r} N^{-\r/(2\r+1)}$ vectors $\vec{\a}$ for which $\a_{\r} < V(\r)$, noting that there are $o(N^k)$, as $N \rightarrow \infty$, such vectors $\vec{\a}$.  

We consider now vectors $\vec{\a}$ with $1 \le \a_{1} < \cdots < \a_{\r} \le N$ and $\a_{\r} \ge V(\r)$.
By Lemma~\ref{ffklemma}, with $k$ fixed and $N$ large, there is a positive integer $d$ and integers $m_{j}$ and $t_{j}$ such that 
\begin{equation}\label{nmdteq}
\a_{j} = m_{j} d + t_{j}
\qquad \text{and} \qquad
|t_{j}| < \varepsilon d < \dfrac{1.01 d}{N^{1/(2\r+1)}}, 
\end{equation}
for all $j \in \{1, \ldots, k\}$.
Also, from \eqref{ffkpap2} and our choice of $\varepsilon$, we choose, as we may, $d$ so that
\begin{equation}\label{dbounds}
\dfrac{n_k}{N^{(k-1)/(2k+1)}} \ll \dfrac{\a_{\r}}{\kappa^{\r-1} + \varepsilon} < d < 1.01 n_k.
\end{equation}
Define
\[
m' = \max_{1 \le j \le k}\{ m_{j} \} = m_{\r}, \quad
t' =  \max\{ 0, t_{1}, \ldots, t_{k} \} \quad \text{and} \quad
t'' = - \min\{ 0, t_{1}, \ldots, t_{k} \}.
\]
Since $k$ is fixed and $N$ is large, we deduce from \eqref{kappabd}, \eqref{nmdteq} and \eqref{dbounds} that
\[
m' < \dfrac{\a_{\r}}{d} + 1 < \kappa^{\r-1} + 1 + \varepsilon \ll \sqrt{N},
\]
\[
\max\{ t',t'' \} < \dfrac{1.01 d}{N^{1/(2\r+1)}} < \dfrac{1.03 N}{N^{1/(2\r+1)}} = 1.03 \,N^{2\r/(2\r+1)},
\]
and
\[
m' \cdot \max\{ t',t'' \} < \bigg( \dfrac{N}{d} + 1 \bigg) \dfrac{1.01 d}{N^{1/(2\r+1)}} < 2.04 \,N^{2\r/(2\r+1)}.
\]
Following a key idea of Schinzel's in \cite{sch7}, we define
\begin{equation}\label{GHdef}
\begin{split}
G(y,z) &= y^{t''} + \sum_{j = 1}^{k} y^{t_{j}+t''} z^{m_{j}} \in \mathbb Z[y,z], \\
H(y,z) &= y^{t'} z^{m'} + \sum_{j = 1}^{k} y^{t'-t_{j}} z^{m'-m_{j}} \in \mathbb Z[y,z].
\end{split}
\end{equation}
A significant motivation here is that $G(x,x^{d}) = x^{t''}F(x)$ and $H(x,x^{d}) = x^{a} \tilde{F}(x)$, where $a$ is a nonnegative integer and $\tilde{F}(x) = x^{\deg F} F(1/x) = x^{n_{k}} F(1/x)$.
In particular, if $g(x) \in \mathbb Z[x]$ is a monic reciprocal factor of $F(x)$, then $g(x)$ is a factor of both $G(x,x^{d})$ and $H(x,x^{d})$.  

For a polynomial $P(y,z) \in \mathbb Z[y,z]$, let $\deg_{y} P$ and $\deg_{z} P$ denote the degree of $P(y,z)$ in $y$ and the degree of $P(y,z)$ in $z$, respectively.  Momentarily, we will make use of the inequality
\begin{equation}\label{degresultbd}
\big( \deg_{y} G \big) \big( \deg_{z} H \big) + \big( \deg_{z} G \big) \big( \deg_{y} H \big) 
\le 2 (t' + t'') m' < 9 \,N^{2\r/(2\r+1)}.
\end{equation}

We are now in a position to prove the following lemma, which is a simplified version of Lemma~2 from \cite{sch7}.

\begin{lemma}\label{lemma2}
Let $F(x)$ be as in \eqref{F(x)eq} with $k \ge 3$ and $V(k) \le n_k \le N$. If $g(x)$ is an irreducible reciprocal factor of $F(x)$ in $\mathbb Z[x]$, then either
\begin{equation}\label{smalldegree}
    \deg g < 9 N^{2k/(2k+1)}
\end{equation}  
or there exist four distinct nonnegative indices $i,j,u,v\le k$ such that 
\begin{equation}\label{gencommonfac2}
    \begin{vmatrix}
        n_i-n_j&m_i-m_j\\n_u-n_v&m_u-m_v
    \end{vmatrix}=0.
\end{equation}
\end{lemma}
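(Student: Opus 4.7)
The plan is to exploit that $g(x)$ is reciprocal, hence divides both $F(x)$ and $\tilde F(x)$, so that from the identities $G(x, x^d) = x^{t''} F(x)$ and $H(x, x^d) = x^{a}\tilde F(x)$ every root $\alpha$ of $g$ is a common zero of $G(y, z)$ and $H(y, z)$ at $(y, z) = (\alpha, \alpha^d)$. The proof will split on whether or not $G$ and $H$ share a common factor in $\mathbb{Q}[y, z]$.

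If $G$ and $H$ are coprime in $\mathbb{Q}[y, z]$, I will form the resultant $R(y) = \operatorname{Res}_z(G, H) \in \mathbb{Q}[y]$, which is nonzero by coprimality. The Sylvester matrix bound gives $\deg_y R \le \deg_y G \cdot \deg_z H + \deg_z G \cdot \deg_y H$, which by \eqref{degresultbd} is strictly less than $9 N^{2k/(2k+1)}$. For any root $\alpha$ of $g$, $z = \alpha^d$ is a common root of $G(\alpha, z)$ and $H(\alpha, z)$, so $R(\alpha) = 0$; the irreducibility of $g$ over $\mathbb{Q}$ then forces $g \mid R$ and hence \eqref{smalldegree}.

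In the other case $G$ and $H$ share an irreducible factor $g_1(y, z) \in \mathbb{Q}[y, z]$. I will first argue that $g_1$ is not a monomial: because $t' = \max\{0, t_1, \dots, t_k\}$ and $t'' = -\min\{0, t_1, \dots, t_k\}$, both $G$ and $H$ contain a term with $y$-exponent $0$, and (since $m_0 = 0$ and $m_k = m'$) both also contain a term with $z$-exponent $0$, so neither is divisible by $y$ or $z$. Thus $\mathcal N(g_1)$ has at least one edge of some direction $(a, b)$. Applying Lemma~\ref{ostrowskithm} to $G = g_1 G_2$ and $H = g_1 H_2$ together with Lemma~\ref{compgeom}, the lower edges of $\mathcal N(g_1)$ appear as parts of lower edges of both $\mathcal N(G)$ and $\mathcal N(H)$ with direction $(a, b)$. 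The identity $H(y, z) = y^{t'+t''} z^{m'} G(1/y, 1/z)$ shows that $\mathcal N(H)$ is the point reflection of $\mathcal N(G)$ about $((t'+t'')/2, m'/2)$, under which lower edges of $\mathcal N(H)$ correspond to upper edges of $\mathcal N(G)$ with the same direction and the same index pair. Consequently $\mathcal N(G)$ has both a lower edge and an upper edge of direction $(a, b)$.

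The main technical hurdle will be concluding that these two parallel edges of $\mathcal N(G)$ involve four distinct indices $i, j, u, v \in \{0, 1, \dots, k\}$, where $n_0 = m_0 = t_0 = 0$. In a non-degenerate convex polygon, a lower edge and an upper edge with equal slope cannot share a vertex without forcing three extreme points to be collinear, so the four endpoints $(t_i + t'', m_i), (t_j + t'', m_j), (t_u + t'', m_u), (t_v + t'', m_v)$ have pairwise distinct indices. The parallelism condition reads $(t_j - t_i)(m_u - m_v) = (t_u - t_v)(m_j - m_i)$, and substituting $t_\ell = n_\ell - m_\ell d$ the $d$-terms cancel on both sides, yielding exactly the determinant equation \eqref{gencommonfac2}. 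In the residual degenerate case where $\mathcal N(G)$ collapses to a line segment, all points $(t_\ell + t'', m_\ell)$ satisfy a common linear relation $\alpha t_\ell + \beta m_\ell = 0$; since $k \ge 3$, any four distinct indices among $\{0, 1, \dots, k\}$ then satisfy \eqref{gencommonfac2} directly.
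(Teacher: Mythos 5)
Your proof is correct and follows essentially the same route as the paper's: a resultant/Sylvester bound when $G$ and $H$ are coprime, and in the non-coprime case the Ostrowski--Minkowski edge decomposition combined with the reflection relating $\mathcal N(G)$ and $\mathcal N(H)$ to produce two parallel edges of $\mathcal N(G)$, hence four distinct indices satisfying \eqref{gencommonfac2}, with the line-segment degeneracy handled via $k\ge 3$ exactly as in the paper. Your explicit observation that neither $G$ nor $H$ is divisible by $y$ or $z$ (so the common factor is not a monomial) is a small point the paper leaves implicit, but the argument is otherwise the same.
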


\smallskip
\begin{proof}
Note that we can take $g(x)$ to be monic since $F(x)$ is monic.  
Let 
\[
D(y,z) = \gcd(G(y,z),H(y,z)),
\] 
where the greatest common divisor is taken over $\mathbb Z[y,z]$.  One can justify that this is the same as the greatest common divisor taken over $\mathbb Q[y,z]$, but this is not needed.  We consider two cases, depending on whether $D(y,z)$ is a constant or not.  Since $g(x)$ is a monic reciprocal factor of $F(x)$, we have $g(x)$ is a factor of both $G(x,x^{d})$ and $H(x,x^{d})$. 
Thus, either $g(x) \mid D(x,x^{d})$ or $g(x) $ divides both $G(x,x^d)/D(x,x^{d})$ and $H(x,x^d)/D(x,x^{d})$.

We consider first the case that $D(y,z)$ is a constant.  For a root $\xi$ of $g(x)$, we see that
\[
G(\xi,\xi^d)=0=H(\xi,\xi^d).
\]
Since $D(y,z)$ is a constant, the resultant of $G(y,z)$ and $H(y,z)$, with respect to $z$, say, is a non-zero polynomial $R(y)$. Furthermore, there exist two non-zero polynomials $A(y,z)$ and $B(y,z)$ in $\mathbb Z[y,z]$ such that
\[
A(y,z)G(y,z)+B(y,z)H(y,z) = R(y).
\]
The substitution $y = \xi$ and $z = \xi^{d}$ shows that $\xi$ is a root of $R(y)$ so that $\deg_y R$ is a bound on $\deg g$.  The definition of the resultant (or its equivalent form as the determinant of a Sylvester matrix) together with \eqref{degresultbd} implies
\[
\deg_y R \le \big( \deg_{y} G \big) \big( \deg_{z} H \big) + \big( \deg_{z} G \big) \big( \deg_{y} H \big) 
< 9 \,N^{2\r/(2\r+1)}.
\]
We deduce that \eqref{smalldegree} holds.

Now, we consider the case that $D(y,z)$ is not a constant.  We appeal to Lemmas~\ref{ostrowskithm} and \ref{compgeom}.  Since $D(y,z)$ is a factor of $G(y,z)$ and $H(y,z)$, these lemmas imply that either a left vertical edge or a lower edge (whichever exists, possibly both), say $\mathcal E$, of $\mathcal N(D)$ can be translated to form part (or all) of an edge of a left vertical or a lower edge of $\mathcal N(G)$ and of $\mathcal N(H)$. On the other hand, the definition of $G(y,z)$ and $H(y,z)$ in \eqref{GHdef} implies that $\mathcal N(G)$ is a translation of the reflection of $\mathcal N(H)$ about the origin.  This reflection takes a left vertical edge or a lower edge of $\mathcal N(H)$ to a parallel right vertical or parallel upper edge on the reflection of $\mathcal N(H)$.  We deduce then that $\mathcal N(G)$ consists of two edges parallel to $\mathcal E$, either a left and right vertical edge or a lower and upper edge. The idea is to deduce then that we can find $4$ distinct points along the edges of $\mathcal N(G)$, which we call $(t_{i},m_{i}), (t_{j},m_{j}), (t_{u},m_{u})$ and $(t_{v},m_{v})$, such that the edge through $(t_{i},m_{i})$ and $(t_{j},m_{j})$ is parallel to the edge through $(t_{u},m_{u})$ and $(t_{v},m_{v})$. What needs a little bit of care for this deduction is the case that $\mathcal N(G)$ is a line segment. However, as a condition in the lemma, we have $k \ge 3$ which in turn implies from \eqref{GHdef} that $\mathcal N(G)$ is the convex hull of at least $4$ distinct points arising from the degrees of $y$ and $z$ in the monomials in $G(y,z)$. If $\mathcal N(G)$ is a line segment, then we can deduce that there are at least $4$ points on this line segment, and labeling them as $(t_{i},m_{i}), (t_{j},m_{j}), (t_{u},m_{u})$ and $(t_{v},m_{v})$, we arrive at the same conclusion as above, that the edge through $(t_{i},m_{i})$ and $(t_{j},m_{j})$ is parallel to the edge through $(t_{u},m_{u})$ and $(t_{v},m_{v})$.  We deduce 
\[
\begin{vmatrix}
    t_i-t_j& m_i-m_j\\ t_u-t_v&m_u-m_v
\end{vmatrix}
    =0.
\] 
For each $e \in \{ i,j,u,v \}$, from \eqref{nmdteq}, we have $t_e = n_e - m_e d$, and now \eqref{gencommonfac2} follows.
\end{proof}

The above lemma will be used to restrict the possibilities for the vector $\vec{n}$ when $k \ge 3$. For the purposes of establishing the main result in \cite{kal}, there is no harm in restricting to $k \ge 3$.  However, we can refer to the literature to resolve the remaining case where $k = 2$ in Theorem~\ref{schinzelweakthm}. In this case, the polynomial $F(x)$ is a $0,1$-polynomial having $3$ terms, and independent work of W.~Ljunggren~\cite{ljunggren} and H.~A.~Tverberg~\cite{tverberg} implies that $F(x)$ removed of its cyclotomic factors is irreducible or $1$.  In the latter case, the non-reciprocal part of $F(x)$ is $1$, and we already showed at the end of Section~\ref{secttwononrecippart} that there are at most $O(N^{k-1})$ such $F(x)$.  This then is sufficient for addressing the case that $k = 2$ in Theorem~\ref{schinzelweakthm}.

We begin taking advantage of Lemma~\ref{lemma2} by making use of a recent result of V.~Dimitrov~\cite{dimitrov} (see also J.~F.~McKee and C.~J.~Smyth~\cite{mckeesmyth} and W.~Zudilin~\cite{zudilin}) resolving the Schinzel-Zassenhaus conjecture.  This will allow us to address the possibility that an irreducible non-cyclotomic reciprocal factor $g(x)$ of $F(x)$ satisfies \eqref{smalldegree}.  Before doing so, however, we point out that one could obtain sufficient information for handling the case of \eqref{smalldegree} by instead using a result of E.~Dobrowolski \cite{dobrowolski}, which is how Schinzel addressed $g(x)$ having a similar degree bound in \cite{sch7}.  The choice of using Dimitrov's result is for convenience (the argument is a little shorter) and to emphasize the utility of this excellent work.

Dimitrov's result in \cite{dimitrov} implies that if $g(x)$ is an irreducible non-cyclotomic polynomial satisfying \eqref{smalldegree}, then there is a root $\xi$ of $g(x)$ satisfying 
\begin{equation}\label{dimitrovbd}
|\xi|\ge 2^{1/(4b)}, 
\qquad \text{where $b=9 N^{2k/(2k+1)} \ge \deg g$}.
\end{equation}
This leads to the following.

\begin{lemma}\label{mahlerarg}
    If $F(x)$ has an irreducible non-cyclotomic factor $g(x)$ satisfying \eqref{smalldegree}, then 
    \begin{equation}\label{lemmadiffbd}
        n_k-n_{k-1}\leq \frac{36\log k}{\log 2}N^{2k/(2k+1)}.
    \end{equation}
\end{lemma}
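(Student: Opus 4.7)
The plan is to use Dimitrov's lower bound \eqref{dimitrovbd} on a root of $g(x)$ together with the trivial triangle inequality bound on the values of the $0,1$-polynomial $F(x)$ at that root. Specifically, since $g(x)$ is an irreducible non-cyclotomic factor of $F(x)$ satisfying \eqref{smalldegree}, Dimitrov's theorem guarantees a complex root $\xi$ of $g(x)$ with $|\xi| \ge 2^{1/(4b)}$, where $b = 9N^{2k/(2k+1)}$. In particular $|\xi| > 1$, and $\xi$ is also a root of $F(x)$.

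The key step is then to isolate the leading term. Since $F(\xi) = 0$ and $F(x) = 1 + x^{n_1} + \cdots + x^{n_k}$, we can write
\[
\xi^{n_k} = -\bigl(1 + \xi^{n_1} + \cdots + \xi^{n_{k-1}}\bigr).
\]
Applying the triangle inequality on the right, and using $|\xi| > 1$ together with $0 \le n_1 < \cdots < n_{k-1}$ so that each term has absolute value at most $|\xi|^{n_{k-1}}$, we obtain
\[
|\xi|^{n_k} \le k \,|\xi|^{n_{k-1}},
\]
and therefore $|\xi|^{n_k - n_{k-1}} \le k$. Taking logarithms gives
\[
(n_k - n_{k-1}) \log |\xi| \le \log k.
\]

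Finally, I would feed in the lower bound $\log|\xi| \ge (\log 2)/(4b)$ from \eqref{dimitrovbd} and divide, which yields
\[
n_k - n_{k-1} \le \frac{4 b \log k}{\log 2} = \frac{4 \cdot 9 N^{2k/(2k+1)} \log k}{\log 2} = \frac{36 \log k}{\log 2} N^{2k/(2k+1)},
\]
as required. The argument is essentially a one-line Mahler-style pigeon bound, and there is no real obstacle once Dimitrov's Schinzel--Zassenhaus bound is in hand; the only thing to check is that $|\xi| > 1$ (which is immediate from $2^{1/(4b)} > 1$) so that the inequality $|\xi|^{n_j} \le |\xi|^{n_{k-1}}$ used for $j \le k-1$ indeed goes the right way.
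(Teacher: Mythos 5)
Your proposal is correct and follows essentially the same route as the paper: apply Dimitrov's bound \eqref{dimitrovbd} to get a root $\xi$ of $g(x)$ (hence of $F(x)$) with $|\xi|\ge 2^{1/(4b)}$, isolate $\xi^{n_k}$, use the triangle inequality to get $|\xi|^{n_k}\le k|\xi|^{n_{k-1}}$, and take logarithms. The only addition is your explicit check that $|\xi|>1$, which the paper leaves implicit.
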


\begin{proof}
Since $g(x)$ is a factor of $F(x)$, there is a root $\xi$ of $g(x)$ and, hence, $F(x)$ satisfying
\[
\xi^{n_k}=-\xi^{n_{k-1}}-\cdots-1, 
\]
where $\xi$ is as in \eqref{dimitrovbd}.  
By the triangle inequality, we obtain
\[
|\xi|^{n_k}\leq k|\xi|^{n_{k-1}}.
\]
Taking logarithms of both sides, we see that
\[
        n_k-n_{k-1} \le \frac{\log k}{\log |\xi|}
        \le \frac{4b \log k}{\log 2}
        =\frac{36\log k}{\log 2}N^{2k/(2k+1)},
\]
completing the proof.
\end{proof}

The conditions on $g(x)$ in Lemma~\ref{lemma2} and Lemma~\ref{mahlerarg} are different, but we are interested in their commonality, where $g(x)$ is an irreducible non-cyclotomic reciprocal factor of $F(x)$.  We note that Lemma~\ref{mahlerarg} is the crucial place where we are taking into account that $g(x)$ is not cyclotomic. Other aspects of the arguments in this section apply even if $g(x)$ is cyclotomic.   

We immediately see that \eqref{lemmadiffbd} happens for a vanishing proportion of vectors $\vec{n}$; there are at most $N^{k-1}$ choices for the first $n_1,n_2\dots,n_{k-1}$ coordinates and then $\ll_{k} N^{2k/(2k+1)}$ for $n_k$. In particular, there are at most
$O_k\big( N^{k - (1/(2k+1))}\big)$ different $F(x)$ for which an irreducible non-cyclotomic reciprocal factor $g(x)$ can satisfy \eqref{smalldegree}.  
Furthermore, an analogous argument allows us to say that for all but $O_k\big( N^{k - (1/(2k+1))}\big)$ different $F(x)$ we have that no two of $n_1, \ldots, n_k$ differ in absolute value by $\ll_k N^{2k/(2k+1)}$ as $N$ goes to infinity.  Here, we have used that there are at most $\binom{k}{2}$ choices for two different $n_j$ and the implied constant in the $\ll_k$ symbol can be an arbitrarily large constant depending only on $k$.  We will make more use of this bound on the number of $n_1, \ldots, n_k$ with no two differing in absolute value by $\ll_k N^{2k/(2k+1)}$ momentarily, but for now we deduce from \eqref{nmdteq} and \eqref{dbounds} that for all but $O_k\big( N^{k - (1/(2k+1))}\big)$ polynomials $F(x)$ as in \eqref{F(x)eq} with $n_k \le N$, the values of $m_1, \ldots, m_k$ are distinct.  More specifically, if some $m_i = m_j$ with $1 \le i < j \le k$, then \eqref{nmdteq} and \eqref{dbounds} imply
\[
0 < n_j - n_i \le |t_j| + |t_i| < \dfrac{2.02 d}{N^{1/(2k+1)}} \le \dfrac{3 N}{N^{1/(2k+1)}} \ll N^{2k/(2k+1)},
\]
which occurs for $O_k\big( N^{k - (1/(2k+1))}\big)$ of the $F(x)$.

We restrict now to the case that $m_1, \ldots, m_k$ are distinct and \eqref{gencommonfac2} holds.  As we also know that $n_1, \ldots, n_k$ are distinct, we obtain that the $4$ entries on the left in \eqref{gencommonfac2} are all non-zero.  From \eqref{gencommonfac2}, we see that
\begin{equation}\label{nmdiff}
(n_{i}-n_{j}) (m_{u}-m_{v}) = (n_{u}-n_{v}) (m_{i}-m_{j}).
\end{equation}
We deduce now from \eqref{nmdteq} and \eqref{dbounds} that
\[
0 < |m_{i}-m_{j}| \le \max\{ |m_{i}|, |m_{j}| \} 
< \dfrac{n_{k}}{d} + 1
< \dfrac{3 n_{k}}{d} \ll N^{(k-1)/(2k+1)}.
\]
The same lower and upper bounds hold on $|m_{u}-m_{v}|$.  
On the other hand, we see from \eqref{nmdiff} that $(n_{i}-n_{j})/\gcd(n_{i}-n_{j},n_{u}-n_{v})$ divides $m_{i}-m_{j}$ and $(n_{u}-n_{v})/\gcd(n_{i}-n_{j},n_{u}-n_{v})$ divides $m_{u}-m_{v}$.  Given the bounds on $|m_{i}-m_{j}|$ and $|m_{u}-m_{v}|$, we deduce that
\begin{equation}\label{maxgcd}
    \frac{\max\{ |n_i-n_j|,|n_u-n_v| \}}{\gcd(|n_i-n_j|,|n_u-n_v|)} \ll N^{(k-1)/(2k+1)}.
\end{equation}
From \eqref{maxgcd}, we see that either 
\begin{equation}\label{gcdbd}
\gcd(|n_i-n_j|,|n_u-n_v|) \ge N^{(k+1)/(2k+1)}
\end{equation}
or
\begin{equation}\label{maxbd}
\max\{ |n_i-n_j|,|n_u-n_v| \} \ll N^{2k/(2k+1)}.
\end{equation}
In the case of \eqref{maxbd}, we have $|n_i-n_j| \ll N^{2k/(2k+1)}$ so that, by our earlier comments, we deduce there are at most $O_k\big( N^{k - (1/(2k+1))}\big)$ polynomials $F(x)$ as in \eqref{F(x)eq} with $n_k \le N$ and \eqref{maxbd} holding for some $i, j, u$ and $v$ as above.  In the case of \eqref{gcdbd}, we see that there are at most $N^{k-2}$ choices for 
\[
\{ n_1, \ldots, n_k \}\backslash\{ n_{j},n_{v} \},
\]
and, given $n_k \le N$, there are $\le N$ choices for $\gcd(|n_i-n_j|,|n_u-n_v|)$. Given these, we deduce from \eqref{gcdbd} that there are $\ll N/N^{(k+1)/(2k+1)} \ll N^{k/(2k+1)}$ choices for $n_j$ and $\ll N^{k/(2k+1)}$ choices for $n_v$.  Hence, there are at most $O_k\big( N^{k - (1/(2k+1))}\big)$ polynomials $F(x)$ as in \eqref{F(x)eq} with $n_k \le N$ and \eqref{gcdbd} holding for some $i, j, u$ and $v$ as above.  

In conclusion, there are $o(N^k)$ polynomials as in Lemma~\ref{lemma2} with $g(x)$ an irreducible non-cyclotomic reciprocal polynomial.  This completes the proof of Theorem~\ref{schinzelweakthm}.


\section{A conjecture associated with Lemma~\ref{fillemma}}

It would be of some interest to establish, in the direction of the conjecture of Odlyzko and Poonen \cite{OP}, that almost all of the $2^{N}$ polynomials of the form \eqref{F(x)eq}, with $0 \le k \le N$ and $n_{k} \le N$, have an irreducible non-reciprocal part.  This would easily follow from Lemma~\ref{fillemma} if one can show the following.

\begin{conjecture}\label{ourconj}
Let $A \subseteq \{0,1,2,\dots,n\}$ with $0 \in A$. Consider the function $\phi$ defined for all such $A$, sending $A$ into the multiset with $|A|^{2}$ elements given by
\[
A-A=\{ x-y:x,y\in A  \}.
\]
Then the image of $\phi$ contains $2^{n-1} + o(2^{n})$ distinct elements as $n \rightarrow \infty$.
\end{conjecture}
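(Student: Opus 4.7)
The plan is to combine a reflection-based pairing of admissible sets with Lemma~\ref{fillemma} to reduce Conjecture~\ref{ourconj} to a statement about the non-reciprocal part of $0,1$-polynomials. For the first step, pair each admissible $A$ with its reflection: given $A \subseteq \{0, 1, \ldots, n\}$ with $0 \in A$, set $A' = \{\max A - a : a \in A\}$; then $0 \in A'$, $A' \subseteq \{0, 1, \ldots, n\}$, and the multiset $A - A$ is invariant under negation, so $\phi(A') = \phi(A)$. The involution $A \mapsto A'$ fixes exactly the sets symmetric about $(\max A)/2$, and a direct count over $m = \max A$ gives their total as $O(2^{n/2}) = o(2^n)$. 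Consequently the $2^n$ admissible sets partition into $2^{n-1} + o(2^n)$ reflection orbits, yielding the upper bound $|\mathrm{Im}(\phi)| \le 2^{n-1} + o(2^n)$.

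Next, I would interpret $\phi$ polynomially. For admissible $A$, set $f_A(x) = \sum_{a \in A} x^{a}$, so that $f_A(x) \tilde{f}_A(x)$ encodes the multiset $A - A$ after normalisation by a power of $x$. The identity $\phi(A) = \phi(B)$ forces $\max A = \max B$ and $f_A \tilde{f}_A = f_B \tilde{f}_B$ in $\mathbb{Z}[x]$. Writing $f_A(x) = R(x)\prod_i p_i(x)^{\mu_i}$ with $R$ reciprocal and the $p_i$ distinct monic non-reciprocal irreducibles, every $g \in \mathbb{Z}[x]$ satisfying $g \tilde{g} = f_A \tilde{f}_A$ arises by choosing, for each pair $(p_i, \tilde{p}_i)$, a multiplicity $a_i \in \{0, 1, \ldots, \mu_i\}$ for $p_i$ in $g$ (and $\mu_i - a_i$ for $\tilde{p}_i$). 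The extremal choices $a_i = \mu_i$ and $a_i = 0$ for all $i$ recover $f_A$ and $\tilde{f}_A$ respectively; any other choice yields a $g \neq f_A, \tilde{f}_A$, and Lemma~\ref{fillemma} guarantees that whenever $\sum_i \mu_i \ge 2$ at least one such $g$ is itself a $0,1$-polynomial with $g(0) = 1$, hence $g = f_B$ for some admissible $B \neq A, A'$. The lower bound $|\mathrm{Im}(\phi)| \ge 2^{n-1} - o(2^n)$ thus reduces to the claim that the number of admissible $A$ for which $\sum_i \mu_i \ge 2$, that is, whose polynomial $f_A$ has at least two (not necessarily distinct) non-reciprocal irreducible factors, is $o(2^n)$.

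The main obstacle is establishing this last claim, which is the Odlyzko-Poonen conjecture restricted to the non-reciprocal part of $f_A$ and remains open with current techniques. A natural line of attack would extend the random-polynomial machinery of Konyagin~\cite{Sergei}, Breuillard and Varj\'u~\cite{BV}, and Bary-Soroker, Koukoulopoulos and Kozma~\cite{BKK}, which controls factorisations of random $0,1$-polynomials by their splitting behaviour modulo small primes, so as to bound only the number of non-reciprocal irreducible factors rather than to insist on full irreducibility. The heart of such an argument would require a mechanism for separating the potentially large reciprocal part of $f_A$ from its non-reciprocal part in these local analyses. Until such a separation is available unconditionally, the statement remains a conjecture.
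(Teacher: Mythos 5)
This statement is labelled a conjecture in the paper for good reason: the paper offers no proof of it, and explicitly records that it is open unconditionally, that it follows from the Odlyzko--Poonen conjecture via Lemma~\ref{fillemma}, and that it holds under the extended Riemann hypothesis. Your write-up is therefore correctly not a proof, and you are right to say so. What you do establish is sound: the involution $A \mapsto A'$ and the $O(2^{n/2})$ count of its fixed points give the upper bound $2^{n-1}+o(2^n)$ on the image, matching the paper's own remark that $A-A = A'-A'$; and your reduction of the lower bound, via $f_A\tilde{f}_A = f_B\tilde{f}_B$ and the factorization of $g$ with $g\tilde{g}=f_A\tilde{f}_A$, correctly shows that Conjecture~\ref{ourconj} is equivalent to the assertion that all but $o(2^n)$ of the polynomials $f_A$ are divisible by at most one non-reciprocal irreducible factor counted with multiplicity. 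That last assertion is precisely the open problem, and no unconditional argument for it is currently known.

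One remark on direction: the paper's purpose in stating Conjecture~\ref{ourconj} is the reverse of your reduction. The authors propose the additive statement about $A-A$ as a potentially elementary, ``intrinsic'' route \emph{toward} the polynomial statement (via Lemma~\ref{fillemma}), precisely because attacking the polynomial statement head-on runs into the difficulty you describe of separating the reciprocal part in the local (mod $p$) analyses of Konyagin, Breuillard--Varj\'u, and Bary-Soroker--Koukoulopoulos--Kozma. Reducing the conjecture back to the polynomial statement is logically valid as an equivalence but does not advance the problem; the hoped-for progress would come from a direct additive-combinatorial analysis of which difference multisets $A-A$ can coincide for sets $B \notin \{A,A'\}$.
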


Observe that if $A$ is as in the conjecture and $a'$ is the maximum element of $A$, then the set
\[
A' = \{ a' - a: a \in A \}
\]
is such that the multisets $A-A$ and $A'-A'$ are identical.  Furthermore, $A' \subseteq \{0,1,2,\dots,n\}$ with $0 \in A'$.  The conjecture is claiming that an element in the image of $\phi$ almost always determines the pair $A$ and $A'$.  
If we view $A$ as the set of exponents of $x$ in $F(x)$ (including $0$) in \eqref{F(x)eq}, then $A'$ corresponds to the set of exponents of $x$ in the reciprocal $\tilde{F}(x)$.

Lemma~\ref{fillemma} implies that Conjecture~\ref{ourconj} follows from the Odlyzko-Poonen conjecture.  
Conjecture~\ref{ourconj} is also true under the extended Riemann hypothesis, through the roundabout argument of linking it to the conditional resolution of the Odlyzko-Poonen conjecture by Breuillard and Varj\'u~\cite{BV}. 
The objective then would be to give an unconditional proof of Conjecture~1 and hopefully one that is elementary and more intrinsic to the conjecture.


\begin{thebibliography}{99}

\bibitem{BKK}
L.~Bary-Soroker, D.~Koukoulopoulos and G.~Kozma,
\textit{Irreducibility of random polynomials: general measures},
Invent.~Math.~233 (2023), 1041--1120. 

\bibitem{Bary}
L.~Bary-Soroker and G.~Kozma, \textit{Irreducible polynomials of bounded height}, Duke Math. J.~169 (2020), 579--598.

\bibitem{compgeometry}
M.~de Berg,  O.~Cheong,  M.~van Kreveld and M.~Overmars, 
Computational geometry, 
Algorithms and applications, third edition,
Springer-Verlag, Berlin, 2008.

\bibitem{bmz}
E.~Bombieri, D.~Masser and U.~Zannier, 
\textit{Anomalous subvarieties---structure theorems and applications}, Int.~Math.~Res.~Not.~IMRN (2007), no.~19, Art. ID rnm057, 33 pp.

\bibitem{bombvaaler}
E.~Bombieri and J.~D.~Vaaler, 
\textit{On Siegel's lemma}, 
Invent.~Math.~73 (1983), 11--32;
\textit{Addendum to: ``On Siegel's lemma''}, 
Invent.~Math.~75 (1984), p.~377.

\bibitem{BV}
E.~F.~Breuillard and P.~P.~Varj\'u, \textit{Irreducibility of random polynomials of large degree}, Acta Math.~223 (2019), 195--249.

\bibitem{dimitrov}
V.~Dimitrov, 
\textit{A proof of the Schinzel-Zassenhaus conjecture on polynomials},
arXiv: 1912.12545 [math.NT], 2019, 27pp.

\bibitem{dobrowolski}
E.~Dobrowolski, 
\textit{On a question of Lehmer and the number of irreducible factors of a polynomial}, 
Acta Arith.~34 (1979), 391--401.

\bibitem{fil1999}
Michael Filaseta,
\textit{On the factorization of polynomials with small {E}uclidean norm},
In {\rm Number theory in progress, Vol. 1
(Zakopane-Ko\'scielisko, 1997)}, de Gruyter, Berlin, 1999, 143--163.

\bibitem{fil2023}
M.~Filaseta,
\textit{On the factorization of lacunary polynomials}, 
Acta Arith.~210 (2023), 23--52.

\bibitem{ffk}
M.~Filaseta, K.~Ford, and S.~Konyagin,
\textit{On an irreducibility theorem of {A}.~{S}chinzel associated with coverings of the integers},
Illinois J. Math.~44 (2000), 633--643.

\bibitem{hlawka}
E.~Hlawka, \textit{\"Uber Gitterpunkte in Zylindern}, \"Osterreich.~Akad.~Wiss.~Math.-Nat.~Kl.~S.-B.~IIa 156 (1948), 203--217.

\bibitem{kal}
A.~Kalogirou,
\textit{Irreducibility of lacunary polynomials with 0,1 coefficients},
arXiv:2410.10035.

\bibitem{Sergei}
S.~V.~Konyagin, \textit{On the number of irreducible polynomials with $0,1$ coefficients}, Acta Arith.~88 (1999), 333--350. 

\bibitem{ljunggren}
W.~Ljunggren, \textit{On the irreducibility of certain trinomials and quadrinomials}, Math.~Scand.~8 (1960), 65--70.

\bibitem{mckeesmyth}
J.~F. McKee and C.~J. Smyth, Around the unit circle -- Mahler measure, integer matrices and roots of unity, Universitext, Springer, Cham, 2021.

\bibitem{OP}
A.~M.~Odlyzko and B.~Poonen, \textit{Zeros of polynomials with $0, 1$ coefficients},
Enseign.~Math.~39 (1993), 317--348.

\bibitem{ostrowski}
A.~M.~Ostrowski, 
\textit{On multiplication and factorization of polynomials. I. Lexicographic orderings and extreme aggregates of terms}, Aequationes Math.~13 (1975), 201--228.

\bibitem{sch7}
A.~Schinzel, 
\textit{Reducibility of lacunary polynomials VII}, Monatsh.~Math.~102 (1986), 309--337.

\bibitem{sch12}
A.~Schinzel, 
\textit{Reducibility of lacunary polynomials XII}, Acta Arith.~90 (1999), 273--289.

\bibitem{schbook}
A.~Schinzel, Polynomials with special regard to reducibility, Encyclopedia of Mathematics and its Applications, 77, Cambridge Univ. Press, Cambridge, 2000.

\bibitem{tverberg}
H.~A.~Tverberg, \textit{On the irreducibility of the trinomials $x\sp{n}\pm x\sp{m}\pm 1$}, Math.~Scand.~8 (1960), 121--126.

\bibitem{zudilin}
W.~Zudilin, Analytic methods in number theory -- when complex numbers count, Monographs in Number Theory, 11, World Sci.~Publ., Hackensack, NJ, 2024.


\end{thebibliography}
\end{document}